\documentclass{amsart}
\usepackage[margin=1in]{geometry}
\usepackage{verbatim}
\geometry{a4paper}
\usepackage[textsize=scriptsize]{todonotes}
\usepackage{stmaryrd}

\usepackage{amsmath}
\usepackage{amssymb}
\usepackage{amsthm}
\usepackage{amscd}
\usepackage{enumerate}
\usepackage[pdfusetitle,unicode,hidelinks]{hyperref}
\usepackage{bbm}
\usepackage{etoolbox}

\usepackage[utf8]{inputenc}

\newcommand{\tomemail}{\href{mailto:tom.bachmann@zoho.com}{tom.bachmann@zoho.com}}

\newtheorem{proposition}{Proposition}
\newtheorem{corollary}[proposition]{Corollary}
\newtheorem{lemma}[proposition]{Lemma}
\newtheorem{theorem}[proposition]{Theorem}

\newtheorem*{theorem*}{Theorem}
\newtheorem*{corollary*}{Corollary}
\newtheorem*{proposition*}{Proposition}
\newtheorem*{lemma*}{Lemma}
\theoremstyle{definition}
\newtheorem{definition}[proposition]{Definition}

\newtheorem*{definition*}{Definition}
\newtheorem*{construction*}{Construction}
\theoremstyle{remark}
\newtheorem{remark}[proposition]{Remark}

\newtheorem{example}[proposition]{Example}

\newcommand{\id}{\operatorname{id}}
\newcommand{\Z}{\mathbb{Z}}
\def\C{\mathbb C}

\let\scr=\mathcal
\let\bb=\mathbb
\newcommand{\Gm}{{\mathbb{G}_m}}

\def\A{\bb A}
\def\P{\bb P}

\newcommand{\DM}{\mathcal{DM}}
\newcommand{\SH}{\mathcal{SH}}

\DeclareMathOperator*{\colim}{colim}
\DeclareMathOperator*{\hocolim}{hocolim}
\let\lim=\relax
\DeclareMathOperator*{\lim}{lim}

\def\PSh{\mathcal{P}}

\def\Spc{\mathcal{S}\mathrm{pc}{}}

\newcommand{\iso}{\cong}
\newcommand{\wequi}{\simeq}

\def\adj{\leftrightarrows}

\DeclareRobustCommand{\ul}{\underline}

\let\cat=\mathrm
\def\Sm{{\cat{S}\mathrm{m}}}
\def\Sch{\cat{S}\mathrm{ch}{}}

\def\Zar{\mathrm{Zar}}

\def\mot{\mathrm{mot}}
\newcommand{\et}{{\acute{e}t}}

\def\pt{*}
\def\naive{\mathrm{naive}}
\def\mot{\mathrm{mot}}
\def\aff{\mathrm{aff}}
\def\Aff{\mathrm{Aff}}
\def\Ft{\mathrm{Ft}}

\def\sPre{\mathrm{sPre}}
\def\sShv{\mathrm{sShv}}
\def\Pre{\mathrm{Pre}}
\def\G{\bb G}
\def\fpqc{\mathrm{fpqc}}
\def\et{\mathrm{\acute et}}
\def\Zar{\mathrm{Zar}}
\newcommand{\Sing}{\operatorname{Sing}}
\newcommand{\fpsr}[1]{\llbracket #1 \rrbracket}
\newcommand{\flsr}[1]{(\!( #1 )\!)}
\newcommand{\Gr}[1]{\mathrm{Gr}_{#1}}
\newcommand{\infinite}{infinite\textsuperscript{\ref{fn:infinite}}}

\title{Affine Grassmannians in $\A^1$-algebraic topology}
\date{\today}

\author{Tom Bachmann}
\address{Fakultät Mathematik, Universität Duisburg-Essen, Essen, Germany}
\email{\tomemail}

\setcounter{tocdepth}{1}

\begin{document}

\maketitle

\begin{abstract}
Let $k$ be a field. Denote by $\Spc(k)_\pt$ the unstable, pointed motivic homotopy
category and by $R^{\A^1} \Omega_\Gm: \Spc(k)_\pt \to \Spc(k)_\pt$ the
($\A^1$-derived) $\Gm$-loops
functor. For a $k$-group $G$, denote by $\Gr{G}$ the affine Grassmannian of $G$.
If $G$ is isotropic reductive, we provide a canonical motivic equivalence $R^{\A^1} \Omega_\Gm G \wequi
\Gr{G}$. We use this to compute the motive $M(R^{\A^1} \Omega_\Gm G) \in \DM(k,
\Z[1/e])$.
\end{abstract}


\section{Introduction}

This note deals with the subject of $\A^1$-algebraic topology. In other words
it deals with with the $\infty$-category $\Spc(k)$ of motivic spaces over a base
field $k$, together with
the canonical functor $\Sm_k \to \Spc(k)$ and, importantly, convenient
\emph{models} for $\Spc(k)$. Since our results depend crucially on the seminal
papers \cite{asok2015affine-I, asok2015affine}, we shall use their
definition of $\Spc(k)$ (which is of course equivalent to the other definitions
in the literature): start with the category $\Sm_k$ of smooth (separated, finite
type) $k$-schemes, form the universal homotopy theory on $\Sm_k$ (i.e. pass to the
$\infty$-category $\PSh(\Sm_k)$ of space-valued presheaves on $k$), and then
impose the \emph{relations} of Nisnevich descent and contractibility of the
affine line $\A^1_k$ (i.e. localise $\PSh(\Sm_k)$ at an appropriate family of
maps).

The $\infty$-category $\Spc(k)$ is presentable, so in particular has finite
products, and the functor $\Sm_k \to \Spc(k)$ preserves finite products. Let $*
\in \Spc(k)$ denote the final object (corresponding to the final $k$-scheme
$k$); then we can form the pointed unstable motivic homotopy category
$\Spc(k)_\pt := \Spc(k)_{/*}$. It carries a symmetric monoidal structure coming
from the smash product. Thus, for any $P \in \Spc(k)_\pt$ we have the functor
$P \wedge \bullet: \Spc(k)_\pt \to \Spc(k)_\pt$. By abstract nonsense, this
functor has a right adjoint $\Omega_P: \Spc(k)_\pt \to \Spc(k)_\pt$, called the
($\A^1$-derived) $P$-loops functor.

For us, the most important instance of this is when $P = \Gm$ corresponds to the
pointed scheme $\Gm := (\A^1 \setminus 0, 1) \in \Sm_k$. Indeed studying the functor
$\Omega_\Gm$ is one of the central open problems of unstable motivic homotopy theory,
since it is crucial in the passage from unstable to stable motivic homotopy
theory. (The functor $\Omega_{S^1}$ is similarly important but much better
understood.) The main contribution of this note is the computation of
$\Omega_\Gm G$, where $G$ is (the image in $\Spc(k)_\pt$ of)
an appropriate group scheme, as corresponding via
the functor $(\Sm_k)_\pt \to \Spc(k)_\pt$ to a certain ind-variety known as the
\emph{affine Grassmannian} $\Gr{G}$: \[ \Omega_\Gm G \wequi \Gr{G}. \] For a
definition of $\Gr{G}$, see
\cite{zhu2016introduction} or Section \ref{sec:grassmann}. For us, the main
points are as follows: there exists a pointed presheaf of sets $\Gr{G} \in \Pre(\Aff_k)$
(where $\Aff_k$ is the category of all affine $k$-schemes) which
is in fact an fpqc sheaf. Moreover, in the category $\Pre(\Aff_k)$, the sheaf
$\Gr{G}$ is a filtered colimit
\begin{equation} \label{eq:GrG-colim}
  X_1 \to X_2 \to \dots \to \Gr{G},
\end{equation}
where each $X_i$ is (the presheaf represented by) a finite type (but in general
highly singular) $k$-scheme.

\subsection*{Classical analog.} Our result (yet to be stated precisely) has the following classical analog.
Suppose that $k = \C$. Then the complex points $\Gr{G}(\C)$ can be given
the structure of a topological space, namely the colimit of the spaces $X_i(\C)$
(with their strong topology). Then $\Gr{G}(\C)$ is homeomorphic to the
so-called polynomial loop-Grassmannian $\mathrm{Gr}^{G(\C)}_0$
of the Lie group $G(\C)$ \cite[7.2(i)]{pressley1984loop}. This space is
homotopy equivalent to the space of smooth loops $\Omega^{sm}(G(\C)')$, where
$G(\C)'$ is the compact form of $G(\C)$ \cite[Proposition 8.6.6, Theorem 8.6.2]{pressley1984loop},
which itself is well-known to be homotopy equivalent to the usual loop space
$\Omega(G(\C)')$. Finally since $G(\C)' \wequi G(\C)$ (by the
Iwasawa decomposition) we have $\Omega(G(\C)') \wequi \Omega(G(\C))$. Putting
everything together, we have found that
\[ \Gr{G}(\C) \iso \mathrm{Gr}^{G(\C)}_0 \wequi \Omega^{sm}(G(\C)') \wequi
     \Omega(G(\C)') \wequi \Omega(G(\C)). \]

\subsection*{Main result}
In order to state our result precisely, we need to make sense of the ``image of
$\Gr{G}$ in $\Spc(S)_\pt$''. For this we use that the functor $\Sm_k^\aff \subset \Sm_k \to \Spc(k)$
extends, by construction, to a functor $\PSh(\Sm_k^\aff) \to \Spc(k)$, and that we
have a fully faithful inclusion $\Pre(\Sm_k^\aff) \subset \PSh(\Sm_k^\aff)$.
Here $\Sm_k^\aff = \Sm_k \cap \Aff_k$. We thus obtain a functor
\[ \rho: \Pre(\Aff_k) \to \Pre(\Sm_k^\aff) \to \PSh(\Sm_k^\aff) \to \Spc(k), \]
and we also denote by $\rho$ the pointed version $\Pre(\Aff_k)_\pt \to
\Spc(k)_\pt$. This finally allows us to state our main result. For the somewhat
technical notion of isotropic groups, see \cite[Definition
3.3.5]{asok2015affine}. This includes in particular all split groups.

\begin{theorem*}[See Theorem \ref{thm:main}]
Let $k$ be an infinite\footnote{Throughout this note, we make frequent reference
to \cite{asok2015affine}. The main results there are stated only for infinite
fields. However they also apply to finite fields [personal communication], and
an update will appear soon. In this note, whenever we assume that a
field is infinite only because of this reason, we denote this as ``\infinite''.
\label{fn:infinite}} field and $G$ an isotropic reductive $k$-group.
Then we have a canonical equivalence
\[ \Omega_\Gm G \wequi \rho(\Gr{G}) \]
in $\Spc(k)_\pt$.
\end{theorem*}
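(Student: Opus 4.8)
We want to prove $\Omega_\Gm G \simeq \rho(\Gr G)$. The key structural input is the "loop-group" presentation of the affine Grassmannian: for a reductive group $G$ there is a fpqc-exact sequence of presheaves on $\Aff_k$,
\[
  LG \to LG / L^+ G \simeq \Gr G,
\]
where $L^+G(R) = G(R\fpsr t)$ is the arc group and $LG(R) = G(R\flsr t)$ is the loop group. The strategy is to recognize each piece after applying $\rho$ in terms of $\Gm$-loops. The point is that $L^+G$ should be $\A^1$-contractible (the variable $t$ can be scaled to $0$), so that $\rho(\Gr G)$ receives a map from $\rho(LG)$ which is, after motivic localization, the quotient by a contractible group; and the loop group $LG$, or rather the relevant "$t^{-1}$-polynomial" part of it, is what computes $\Omega_\Gm G$ via the identification $\Gm = \operatorname{Spec} k[t,t^{-1}]$ and the formula $\Omega_\Gm G = \iHom_\pt(\Gm, G)$.

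**Key steps, in order.**

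\textbf{Step 1: Reduce $\Omega_\Gm$ to an explicit internal-hom presheaf.} Since $\Omega_\Gm$ is right adjoint to $\Gm \wedge (-)$, for $G$ a group scheme we have $\Omega_\Gm G \simeq \iHom_{\Spc(k)_\pt}(\Gm, G)$, the pointed mapping object. Using that $G$ is already $\A^1$-local in a strong sense among group schemes (this is where \cite{asok2015affine-I, asok2015affine} enter: isotropic reductive groups are $\A^1$-naive / satisfy affine representability, so their $\A^1$-localization is computed Nisnevich-locally by $\operatorname{Sing}_{\A^1}$ and mapping out of smooth affines), rewrite $\Omega_\Gm G$ as the motivic localization of a concretely describable presheaf on $\Sm_k^\aff$: roughly $R \mapsto \ker\big(G(R[t,t^{-1}]) \to G(R)\big)$, suitably derived/sheafified.

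\textbf{Step 2: Identify that presheaf with $\rho$ of a sub-presheaf of $LG$ on affines.} On the level of $R$-points, an element of $\Omega_\Gm G(R)$ "is" a $G$-valued Laurent polynomial trivial at $t=1$. There is a comparison map to $\Gr G(R) = LG/L^+G\,(R)$ sending a Laurent-polynomial loop $\gamma(t)$ to its class; substituting $t \mapsto t^{-1}$ if needed, this lands in the "$\le n$" pieces $X_i$ and exhibits a natural transformation $\Omega_\Gm G \to \rho(\Gr G)$ of presheaves on $\Sm_k^\aff$. Construct this map carefully and check it is pointed.

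\textbf{Step 3: Contractibility of the arc group and fiber-sequence bookkeeping.} Show $L^+G$ (restricted to smooth affines, after motivic localization) is $\A^1$-contractible: the assignment $t \mapsto st$ for $s \in \A^1$ gives an $\A^1$-homotopy from $\id_{L^+G}$ to the constant map at the image of $G$, and $G \hookrightarrow L^+G$ is itself a deformation retract by the same scaling (or one shows $L^+G \simeq G$ and then $G \to L^+G \to \Gr G$ realizes $\Gr G$ as the motivic quotient $G \backslash\!\backslash LG$ with contractible... — more precisely one gets $\rho(\Gr G) \simeq \rho(LG)/\rho(L^+G)$ and $\rho(L^+G)$ motivically trivial relative to $G$). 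Then a fpqc-descent / Nisnevich-descent argument upgrades "$LG \to \Gr G$ is an fpqc torsor under $L^+G$" to a fiber sequence in $\Spc(k)_\pt$.

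\textbf{Step 4: Assemble.} Combine Steps 1–3: the map of Step 2 fits into a map of fiber sequences whose fiber and total-space comparisons are equivalences (by the affine-representability results and the contractibility of $L^+G$), hence so is the map on base, giving $\Omega_\Gm G \simeq \rho(\Gr G)$.

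**Main obstacle.** The hard part is \textbf{Step 3 together with the passage through $\rho$}: the schemes $X_i$ in \eqref{eq:GrG-colim} are highly singular, so one cannot directly invoke smooth affine representability results for them; one must instead work with the \emph{smooth affine} test objects and show that the relevant colimit, sheafification, and $\A^1$-localization operations commute with $\rho$ and with the formation of $\Omega_\Gm$ — i.e. that $\rho$ of the fpqc sheaf $\Gr G$ already has the correct motivic homotopy type without needing to resolve singularities. Controlling $\operatorname{Sing}_{\A^1}$ applied to the singular $X_i$, and proving that the loop-group torsor $LG \to \Gr G$ remains a "motivic fiber sequence" after $\rho$, is where the real content lies; the affine representability theorems of Asok–Hoyois–Wendt are the tool that makes it go through for the group $G$ itself, and descent along the (ind-)fpqc-local structure of $\Gr G$ propagates it to the Grassmannian.
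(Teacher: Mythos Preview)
Your Step 1 is correct and matches the paper's Proposition \ref{prop:loops-main}: using the $\A^1$-naivety of isotropic reductive $G$ one identifies $\Omega_\Gm G$ with (the motivic localization of) the presheaf quotient $R \mapsto G(R[t,t^{-1}])/G(R[t])$ on $\Sm_k^\aff$ --- in the paper's notation, $L_0G/L_0^+G$. Your scaling homotopy $t \mapsto st$ is likewise the paper's mechanism for passing from the denominator $G(R)$ to $G(R[t])$ (Proposition \ref{prop:Omega-gr}(3)).

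The gap is in Steps 3--4, and it is not where you locate it. The singularities of the $X_i$ play no role whatsoever in the proof of the main theorem; the paper treats $\Gr_G$ purely as a presheaf on smooth affines and never applies $\Sing_*$ to any singular scheme. What your plan is actually missing is the comparison between the \emph{polynomial} quotient $L_0G/L_0^+G$ (which is what Step 1 produces, since $\Gm = \operatorname{Spec} k[t,t^{-1}]$) and the \emph{formal} quotient $LG/L^+G$ defining $\Gr_G$. Your fiber-sequence scheme, with $LG \to \Gr_G$ as the torsor, would require $\rho(L_0 G) \simeq \rho(LG)$ on total spaces, and there is no reason for this. The paper instead proves (Proposition \ref{prop:aff-grass-Zar-2}) that on smooth affine schemes the Zariski sheafification of $L_0G/L_0^+G$ already agrees with $\Gr_G$ as a presheaf of \emph{sets}; the inputs are Beauville--Laszlo gluing, the Grothendieck--Serre conjecture over fields, and $\A^1$-invariance of Nisnevich-locally-trivial $G$-torsors over smooth affine bases --- none of which appear in your proposal. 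Since Zariski sheafification is a motivic equivalence, this finishes the argument with no fiber-sequence or descent manoeuvres. Your proposed route of upgrading the fpqc torsor $LG \to \Gr_G$ to a motivic fiber sequence via ``fpqc/Nisnevich descent'' would in any case not go through formally: motivic localization does not preserve limits, and fpqc covers are far from Nisnevich covers.
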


\subsection*{Organisation and further results}
In Section \ref{sec:Gm-loops} we study the interaction of $Sing_*$ and various
models of $\Omega_\Gm$. Combining this with results of \cite{asok2015affine}, we
obtain a preliminary form of our main computation (see Proposition
\ref{prop:loops-main}): $\Omega_\Gm G$ is motivically
equivalent to the presheaf
\begin{equation} \label{eq:GrG-naive}
  X \mapsto G(X[t, t^{-1}])/G(X).
\end{equation}

In Section
\ref{sec:grassmann}, we review affine Grassmannians. We make no claims to
originality here. The main point is this: $\Gr{G}$ is usually defined as the fpqc
sheafification of the presheaf $X \mapsto G(X\flsr{t})/G(X\fpsr{t})$. We show
that at least over an infinite field, and assuming that $G$ is split, this is
isomorphic to the Zariski sheafification of \eqref{eq:GrG-naive}; see
Proposition \ref{prop:aff-grass-Zar}. We also prove that this is an isomorphism
on sections over \emph{smooth} affine schemes, for any field $k$, and only assuming that $G$
is isotropic; see Proposition \ref{prop:aff-grass-Zar-2}. This is enough for our
eventual application.

In Section \ref{sec:main}, we first deduce the main theorem. This is trivial by
now, since Zariski sheafification is a motivic equivalence. After that we
explore some consequences. We show in Corollary \ref{corr:colim-Ft}
that if $k$ is perfect, then the $\Z[1/e]$-linear motive of $\rho(\Gr{G}) \wequi
\Omega_\Gm G$ is in fact the
filtered colimits of the motives of the singular varieties
$X_i$ from \eqref{eq:GrG-colim}. Since the geometry of the $X_i$ is
well-understood, this allows us in Corollary \ref{corr:motive} to determine the
motive of $\Omega_\Gm G$.

\subsection*{Acknowledgements}
I would like to thank Timo Richarz for patiently explaining many basic facts about affine
Grassmannians, and in particular for explaining to me Lemma \ref{lemm:timo}. I
would also like to thank Maria Yakerson for comments on a draft, and Marc Hoyois
for an enlightening discussion about the Gronthendieck-Serre conjecture and the
ldh topology.

\subsection*{Language and models}
Throughout this note, we frequently switch between various models for motivic
homotopy theory. Section \ref{sec:Gm-loops} is written in the language of
simplicial presheaves and model categories. This is because our manipulations
here are essentially on a point-set level. In particular we employ an
appropriate localisation of the injective local model structure on
$\sPre(\Sm_k^\aff)$ as our model for $\Spc(k)$. Section \ref{sec:grassmann}
deals exclusively with presheaves of \emph{sets}, this time on $\Aff_k$,
reflecting its essentially geometric nature. Finally
Section \ref{sec:main} is written in the language of $\infty$-categories, since
we find our manipulations there are most easily understood in this abstract,
model-independent framework.

\subsection*{Notation}
If $\scr C$ is a small $1$-category, we write $\Pre(\scr C)$ for the
$1$-category of presheaves of sets on $\scr C$, we write $\sPre(\scr C)$ for the
$1$-category of presheaves of simplicial sets on $\scr C$, and we write
$\PSh(\scr C)$ for the $\infty$-category of presheaves of spaces on $\scr C$.

\section{$\Gm$-loops of groups}
\label{sec:Gm-loops}
Let $\scr C$ be an essentially small 1-category with finite products. We write
$\pt \in \scr C$ for the final object. Throughout we fix $\G \in \scr C_{\pt} :=
\scr C_{\pt/}$. We write $\sPre(\scr C)$ for the 1-category of simplicial
presheaves on $\scr C$ and $\sPre(\scr C)_\pt := \sPre(\scr C)_{\pt /}$ for the
pointed version. This admits an \emph{injective model structure} where the weak
equivalences are given objectwise, and the cofibrations are the monomorphisms
\cite[Theorem II.5.8]{jardine-local-homotopy}.
We note that the canonical map $\pt \to \G \in \scr C$ has a section, so is a
monomorphism; in particular $\G \in \sPre(\scr C)_\pt$ is a cofibrant object.

We fix a further object $\A \in \scr C$ together with a map $\G \to \A$.
We call $\scr X \in \sPre(\scr C)$
\emph{$\A$-invariant} if for all $c \in \scr C$, the canonical map $\scr X(c) \to \scr
X(\A \times c)$ is a weak equivalence.

\begin{example}
The case we have in mind is, of course, where $\scr C = \Sm_S$, $\G = \Gm$, and
$\A = \A^1$.
\end{example}

Let us recall that the functor $\sPre(\scr C)_\pt \to \sPre(\scr C)_\pt, \scr X
\mapsto \G \wedge \scr X$ has a right adjoint $\Omega_\G^\naive: \sPre(\scr
C)_\pt \to \sPre(\scr C)_\pt$. It is specified in formulas by asserting that the
following square is cartesian (which in general need \emph{not} imply that it is
homotopy cartesian)
\begin{equation*}
\begin{CD}
  \Omega_\G^\naive(\scr X)(c) @>>> \scr X(\G \times c) \\
       @VVV                             @V{i^*}VV      \\
  \pt                         @>{j_*}>> \scr X(c).
\end{CD}
\end{equation*}
Here $i: \pt \to \G$ is the canonical pointing, as is $j: \pt \to \scr X$. Since
$\G$ is cofibrant, the functor $\Omega_\G^\naive$ is right Quillen (in the
injective model structure we are using), and hence admits a total derived
functor $R\Omega_\G$ which can be computed as $R\Omega_G \scr X \wequi
\Omega_\G^\naive R_f \scr X$, where $R_f$ is a fibrant replacement functor.

\begin{remark}
We denote the underived functor by $\Omega_\G^\naive$ instead of just
$\Omega_\G$ in order to make its point set level nature notationally explicit.
\end{remark}

\begin{remark} \label{rmk:ROmega-computation}
Even if $\scr X$ is objectwise fibrant (i.e. projective fibrant), it need not be
injective fibrant. Indeed a further condition for injective fibrancy is that for
any monomorphism $c \to d \in \scr C$, the induced map $\scr X(d) \to \scr X(c)$
must be a fibration. In particular $\scr X(\G \times c) \to \scr X(c)$ is a
fibration, and we deduce from right properness of the model structure on
simplicial sets \cite[Corollaries II.8.6 and II.8.13]{goerss2009simplicial} that for
any $\scr X \in \sPre(\scr C)_\pt$,
the following diagram is \emph{homotopy cartesian}:
\begin{equation*}
\begin{CD}
  R\Omega_\G(\scr X)(c) @>>> \scr X(\G \times c) \\
       @VVV                             @V{i^*}VV      \\
  \pt                         @>{j_*}>> \scr X(c).
\end{CD}
\end{equation*}

Since $\G$ is not projective cofibrant (in general), the functor
$\Omega_\G^\naive$ is not right Quillen in the projective model structure. In
order to derive it in the projective setting, we first have to cofibrantly
replace $\G$, for example by the cone $\tilde{\G}$ on $* \to \G$. Of course then
$\Omega_{\tilde{\G}} R_f^{proj} \scr X \wequi R\Omega_\G \scr X$.
\end{remark}

Now suppose that $\scr G \in \sPre(\scr C)$ is a presheaf of simplicial groups.
Then $\scr G$ has a canonical pointing, given by the identity section. Thus
$\scr G \in \sPre(\scr C)_\pt$, in a canonical way.
\begin{definition}
We denote by
$\Omega_\G^{gr}(\scr G) \in \sPre(\scr C)$ the simplicial presheaf
\[ \Omega_\G^{gr}(\scr G)(c) = \scr G(\G \times c)/p^* \scr G(c), \]
where $p: \G \times c \to c$ denotes the projection.  
We define a further variant
\[ \Omega_{\G,\A}^{gr}(\scr G)(c) = \scr G(\G \times c)/i^* \scr G(\A \times G), \]
where $i: \G \to \A$ is the canonical map.
\end{definition}
Since $p$ has a section,
$p^*$ is injective and identifies $\scr G(c)$ with a subgroup of $\scr G(\G
\times c)$, so we will drop $p^*$ from the notation. Clearly
$\Omega_\G^{gr}(\scr G), \Omega_{\G,\A}^{gr}(\scr G)$ are
functorial in the presheaf of simplicial groups $\scr G$. Note that unless $\scr G$
is abelian, $\Omega_\G^{gr}(\scr G), \Omega_{\G,\A}^{gr}(\scr G)$ are
not a priori a presheaves of groups.  
Note also that $\scr G(c) \subset \scr
G(\A \times c)$, and hence there is a canonical surjection $\Omega_\G^{gr}(\scr
G) \to \Omega_{\G,\A}^{gr}(\scr G)$.

\begin{definition}
We call $\scr G \in \sPre(\scr C)$ \emph{$(\G,\A)$-injective} if for each $c \in \scr C$,
the restriction $\scr G(\A \times c) \to \scr G(\G \times c)$ is injective.
\end{definition}

\begin{proposition} \label{prop:Omega-gr}
Let $\scr G \in \sPre(\scr C)_\pt$ be a presheaf of simplicial groups,
canonically pointed by the identity.
\begin{enumerate}
\item There is a canonical isomorphism $\Omega_\G^\naive(\scr G) \to
  \Omega_\G^{gr}(\scr G)$.
\item The canonical map $\Omega^\naive_\G(\scr G) \to R\Omega_\G(\scr G)$ is an
  objectwise weak equivalence.
\item Suppose that $\scr G$ is $\A$-invariant and $(\G, \A)$-injective.
  Then the canonical map $\alpha: \Omega_\G^{gr}(\scr G) \to \Omega_{\G,\A}^{gr}(\scr
  G)$ is an objectwise weak equivalence.
\end{enumerate}
\end{proposition}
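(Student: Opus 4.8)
The plan is to treat the three parts in turn; parts (1) and (2) are essentially formal, while part (3) carries the real content.

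For (1): by definition $\Omega_\G^\naive(\scr G)(c)$ is the strict pullback of $\pt \to \scr G(c) \xleftarrow{i^*} \scr G(\G \times c)$, i.e.\ the fibre of the restriction map $i^*$ over the basepoint. Since $\scr G$ is a presheaf of simplicial \emph{groups}, $i^*$ is in each simplicial degree a group homomorphism, and it is split by $p^*$ because the pointing $c = \pt \times c \to \G \times c$ splits the projection $p \colon \G \times c \to c$; hence the fibre over the basepoint is exactly $\ker(i^*)$. The splitting yields a set-theoretic decomposition $\scr G(\G \times c) = \ker(i^*) \cdot p^*\scr G(c)$ with trivial intersection, so the composite $\ker(i^*) \hookrightarrow \scr G(\G \times c) \twoheadrightarrow \scr G(\G \times c)/p^*\scr G(c)$ is a bijection; it is manifestly natural in $c$ and in $\scr G$, giving the asserted canonical isomorphism $\Omega_\G^\naive(\scr G) \cong \Omega_\G^{gr}(\scr G)$.

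For (2): by Remark \ref{rmk:ROmega-computation}, $R\Omega_\G(\scr G)(c)$ is the \emph{homotopy} pullback of the same cospan $\pt \to \scr G(c) \xleftarrow{i^*} \scr G(\G \times c)$, whereas $\Omega_\G^\naive(\scr G)(c)$ is the strict pullback. As $i^*$ is degreewise a split surjection of groups, it is surjective in each degree and hence a Kan fibration; therefore the strict pullback agrees with the homotopy pullback and the canonical comparison $\Omega_\G^\naive(\scr G) \to R\Omega_\G(\scr G)$ is an objectwise weak equivalence.

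For (3): at an object $c$ the map $\alpha$ is the canonical surjection of coset spaces $\scr G(\G \times c)/p^*\scr G(c) \to \scr G(\G \times c)/i^*\scr G(\A \times c)$ (here $i \colon \G \to \A$), and it is well-defined because the composite $\scr G(c) \to \scr G(\A \times c) \xrightarrow{i^*} \scr G(\G \times c)$ of projection and restriction is again the projection $p^*$, so that $p^*\scr G(c) \subseteq i^*\scr G(\A \times c)$. The hypothesis that $\scr G$ is $(\G, \A)$-injective says precisely that $i^* \colon \scr G(\A \times c) \to \scr G(\G \times c)$ is injective, hence identifies $\scr G(\A \times c)$ with a simplicial subgroup $H$ of $\Gamma := \scr G(\G \times c)$ under which $\scr G(c) \subseteq \scr G(\A \times c)$ corresponds to $K := p^*\scr G(c) \subseteq H$. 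I would then invoke the standard fact that for simplicial subgroups $K \subseteq H \subseteq \Gamma$ the quotient map $\Gamma/K \to \Gamma/H$ is a Kan fibration with fibre $H/K$ (it is the fibre bundle associated to the principal $H$-fibration $\Gamma \to \Gamma/H$; see e.g.\ \cite{goerss2009simplicial}). Under the identification above $H/K \cong \scr G(\A \times c)/\scr G(c)$, and the $\A$-invariance hypothesis says the map $\scr G(c) \to \scr G(\A \times c)$ is a weak equivalence; feeding this into the long exact sequence of the fibration $\scr G(c) \to \scr G(\A \times c) \to \scr G(\A \times c)/\scr G(c)$ shows that $\scr G(\A \times c)/\scr G(c)$ is weakly contractible. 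Since every fibre of $\alpha_c$ is a left translate of $H/K$, all fibres of $\alpha_c$ are weakly contractible, and a Kan fibration with weakly contractible fibres over every point of its base is a weak equivalence; thus $\alpha$ is an objectwise weak equivalence.

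The formal parts present no difficulty; the main obstacle is the bookkeeping in (3). The two things to get right are: that $(\G,\A)$-injectivity is exactly what is needed to identify the fibre of the map $\alpha_c$ of (generally non-abelian) coset spaces with $\scr G(\A \times c)/\scr G(c)$; and that one must use the fact that quotients of simplicial groups by simplicial subgroups are Kan fibrations in order to convert $\A$-invariance into contractibility of that fibre via the long exact sequence. A minor point is that $\Omega_{\G,\A}^{gr}(\scr G)(c)$ need not be connected, so one should note that all fibres of $\alpha_c$ are translates of a single one before arguing componentwise.
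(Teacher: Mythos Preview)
Your argument is correct. Parts (1) and (2) coincide with the paper's proof: the same identification of $\Omega_\G^\naive(\scr G)(c)$ with $\ker(i^*)$, the same splitting via $p^*$, and the same appeal to the fact that a surjection of simplicial groups is a Kan fibration, so that the strict fibre computes the homotopy fibre.

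For (3) you take a genuinely different route. The paper isolates a separate lemma: given a homomorphism $\theta\colon G_*\to G'_*$ of simplicial groups carrying a subgroup $H_*$ into $H'_*$, if both $\theta$ and $\theta|_{H_*}$ are weak equivalences then so is $G_*/H_*\to G'_*/H'_*$. This is proved by identifying $G_*/H_*$ with the homotopy orbits via a two-sided bar resolution, and then applied with $G_*=G'_*=\scr G(\G\times c)$, $\theta=\id$, $H_*=\scr G(c)$, $H'_*=\scr G(\A\times c)$. You instead work directly with the tower $K\subset H\subset\Gamma$ and the fibre sequence $H/K\to\Gamma/K\to\Gamma/H$, using the long exact sequence of the fibration $K\to H\to H/K$ together with $\A$-invariance to see that $H/K$ is weakly contractible, and then the observation that all fibres of $\Gamma/K\to\Gamma/H$ are translates of $H/K$ to conclude. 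Your approach is more elementary and self-contained for this particular situation (it avoids the bisimplicial bar construction), while the paper's lemma is stated in greater generality (allowing the ambient group to change). The only point worth a word of justification is that $\Gamma/K\to\Gamma/H$ is indeed a Kan fibration; this follows, for instance, by lifting horns through $\Gamma$ and correcting by an element of the Kan complex $H$, or by recognising it as the bundle with fibre $H/K$ associated to the principal $H$-fibration $\Gamma\to\Gamma/H$.
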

\begin{proof}
(1) We have for $c \in \scr C$ the canonical map
\[ \alpha_c: \Omega_\G^\naive(\scr G)(c) = ker(\scr G(\G \times c) \to \scr G(c))
                \to \scr G(\G \times c) \to \scr G(\G \times c)/\scr G(c)
                = \Omega_\G^{gr}(\scr G)(c). \]
These fit together to form a canonical map $\Omega_\G^\naive(\scr G) \to
\Omega_\G^{gr}(\scr G)$, which we claim is an isomorphism. Write $j^*: \scr G(\G
\times c) \to \scr G(c)$ for pullback along $\pt \to \G$.
Define a map of sets $\beta: \scr G(\G \times c) \to
\scr G(\G \times c)$ via $\beta(g) = (p^*j^*g)^{-1} g$. If $a \in \scr G(c)$,
then $\beta(ag) = (p^*j^*(ag))^{-1} ag = (a p^*j^*g)^{-1} ag = \beta(g)$.
Furthermore $p^* \beta(g)$ is the identity element of $\scr G(c)$, by
construction. It follows that $\beta$ takes values in $\Omega_\G^\naive(\scr
G)(c)$ and factors through the surjection $\scr G(G \times c) \to
\Omega_\G^{gr}(\scr G)(c)$ to define $\bar{\beta}: \Omega_\G^{gr}(\scr G)(c) \to
\Omega_\G^\naive(\scr G)(c)$. We check immediately that $\bar{\beta}$ is inverse to
$\alpha_c$.

(2) Since $j: \pt \to \G$ has a section ($\pt$ being final), the induced map
$j^*: \scr G(\G \times c) \to \scr G(c)$ is a surjection of simplicial groups,
and hence a fibration \cite[Corollary V.2.7]{goerss2009simplicial}. It follows
from right properness of the model structure on
simplicial sets \cite[Corollary II.8.6]{goerss2009simplicial}
that
\[ \Omega_\G^\naive(\scr G)(c) = fib(\scr G(\G \times c) \to \scr G(c)) \wequi
hofib(\scr G(\G \times c) \to \scr G(c)) \wequi R\Omega_\G(\scr G)(c); \]
see Remark \ref{rmk:ROmega-computation} for the last weak equivalence. Thus the
canonical map is indeed an objectwise weak equivalence.

(3) is an immediate consequence of Lemma \ref{lemm:simplicial-group-subgroup}
below (applied with $G_* = G'_* = \scr G(\G \times c)$, $H_* = \scr G(c), H'_* =
\scr G(\A \times c)$).
\end{proof}

\begin{lemma} \label{lemm:simplicial-group-subgroup}
Let $\theta: G_* \to G'_*$ be a homomorphism of simplicial groups and $H_*
\subset G_*, H'_* \subset G'_*$ simplicial subgroups such that $\theta(H_*)
\subset H'_*$. If each of the maps $\theta: G_* \to G'_*$ and $\theta: H_* \to
H'_*$ are weak equivalences, then so is the induced map $G/H_* \to G'/H'_*$.
\end{lemma}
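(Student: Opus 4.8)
The plan is to realise $G/H_*$ as the homotopy fibre of a map of classifying spaces and then invoke the invariance of homotopy fibres under weak equivalences. Recall the classical fact (see e.g. \cite[Ch.~V]{goerss2009simplicial}) that for a simplicial group $G_*$ and a simplicial subgroup $H_* \subseteq G_*$, the $H_*$-action on $G_*$ by right translation is free and the quotient map $q \colon G_* \to G/H_*$ is a principal $H_*$-bundle; in particular $q$ is a Kan fibration with fibre $H_*$, and it extends, naturally in the pair $(H_* \subseteq G_*)$, to a fibre sequence
\[ G/H_* \longrightarrow \bar W H_* \longrightarrow \bar W G_*, \]
where $\bar W$ denotes the classifying space construction. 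Equivalently, $G/H_*$ is a model for the homotopy fibre of $\bar W H_* \to \bar W G_*$, and under this identification the map $G/H_* \to G'/H'_*$ induced by $\theta$ is precisely the map of homotopy fibres induced by the commuting square
\begin{equation*}
\begin{CD}
\bar W H_* @>>> \bar W G_* \\
@VVV @VVV \\
\bar W H'_* @>>> \bar W G'_*.
\end{CD}
\end{equation*}

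Next I would observe that both vertical maps in this square are weak equivalences. Indeed $\bar W$ sends objectwise weak equivalences of simplicial groups to weak equivalences --- for instance because $\bar W G_*$ is the diagonal of the bisimplicial set $[n] \mapsto G_*^{\times n}$ and the diagonal of a levelwise weak equivalence of bisimplicial sets is a weak equivalence; alternatively this is contained in the statement that $\bar W$ and the loop group functor form a Quillen equivalence \cite[Ch.~V]{goerss2009simplicial}. Hence $\bar W(\theta|_{H_*})$ and $\bar W(\theta)$ are weak equivalences by hypothesis.

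Finally, in any commuting square of simplicial sets in which both vertical maps are weak equivalences, the induced map of homotopy fibres (over any basepoint) is again a weak equivalence, since the homotopy fibre is a homotopy limit. Applying this to the square above shows that $G/H_* \to G'/H'_*$ is a weak equivalence, as desired. (One could instead argue directly with homotopy groups: $q$ and $q'$ being Kan fibrations with fibres $H_*$, $H'_*$, compare the long exact sequences of homotopy groups of the two fibre sequences $H_* \to G_* \to G/H_*$ and $H'_* \to G'_* \to G'/H'_*$ via the five lemma; here one uses that every vertex of $G/H_*$ lies in the image of $q$, so that left translation reduces the computation to the basepoint coset, and that $\pi_0$ of a simplicial group is a group, so that the five lemma still applies at the bottom of the sequence.)

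The only non-formal ingredient is the classical identification of $G/H_*$ with the homotopy fibre of $\bar W H_* \to \bar W G_*$ --- equivalently, the assertion that $q\colon G_* \to G/H_*$ is a Kan fibration with fibre $H_*$ --- together with its naturality in the pair $(H_* \subseteq G_*)$. I expect pinning down the cleanest reference for this to be the main (and essentially the only) point requiring care; once it is in hand, the statement is pure homotopy-limit formalism.
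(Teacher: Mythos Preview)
Your proof is correct and takes a genuinely different route from the paper's. The paper realises $G/H_*$ as a homotopy \emph{colimit}: since $H_*$ acts freely on $G_*$, the strict quotient computes the homotopy orbits $G_{*hH_*}$, and this is made precise via an explicit two-sided bar construction $B(H,G)$ as a bisimplicial set, together with the diagonal/levelwise weak equivalence criterion. Your argument is the dual one, realising $G/H_*$ as a homotopy \emph{limit}: you use that $G_* \to G/H_*$ is a Kan fibration with fibre $H_*$ (equivalently, that $G/H_*$ is the homotopy fibre of $\bar W H_* \to \bar W G_*$), and then invoke invariance of homotopy fibres, or the five lemma on the long exact sequences.

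Your parenthetical five-lemma argument is the cleaner of your two variants and is entirely self-contained once one knows that $G_* \to G/H_*$ is a Kan fibration; this is indeed classical (e.g.\ May, \emph{Simplicial Objects in Algebraic Topology}, \S18, or the principal fibration material in Goerss--Jardine, Chapter~V). The $\bar W$ variant is also correct, but the identification of the homotopy fibre of $\bar W H_* \to \bar W G_*$ with $G/H_*$ deserves a word of justification (e.g.\ by factoring through $WG_*/H_* \to \bar W G_*$ and using that $WG_*/H_*$ is a classifying space for $H_*$); you are right that this is where the only real work lies.

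What each approach buys: the paper's bar-construction argument avoids any appeal to the Kan fibration property of $G_* \to G/H_*$ and keeps everything at the level of explicit bisimplicial formulas, which fits the point-set flavour of that section. Your approach is shorter and conceptually transparent, trading the bisimplicial bookkeeping for one classical input about quotients by simplicial subgroups.
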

\begin{proof}
We have $G/H_* \wequi \hocolim_{BH_*} G_*$, since the action is free. Since
the right hand side only depends on $G_*$ and $H_*$ up to weak equivalence, the
result follows.

We can make the above slightly sketchy argument precise as follows. Write
$\bar{G}_*$ for $G_*$ viewed as a bisimplicial set constant in the second
variable, i.e. $\bar{G}_n = G_*$ for all $n$. Define $\bar{H}_*$ similarly. Let $B(H,
G)_*$ be the bisimplicial set $(EH)_* \times \bar{G}_*$,
where $(EH)_n = H_*^{n+1}$. We let $\bar{H}_*$ act on $B(H,G)_*$
diagonally. Then $[B(H,G)/\bar{H}]_n = (H_* \times H_*^n \times G_*)/H_* \iso H_*^n
\times G_*$, and so the canonical
map $B(H,G)/\bar{H}_* \to B(H',G')/\bar{H}'_*$ is a levelwise weak equivalence.
By \cite[Proposition IV.1.7]{goerss2009simplicial}, hence so
is the induced map on diagonals $d(B(H,G)/\bar{H}_*) \to d(B(H',G')/\bar{H}'_*)$.
It is thus enough to prove that
$d(B(H,G)/\bar{H}_*) \wequi G_*/H_*$. The unique map $(EH)_* \to *$ induces $B(H,G) \to
\bar{G}_*$ and then $B(H,G)/\bar{H}_* \to \overline{G/H}_*$. Since
$d(\overline{G/H}_*) = G/H_*$, it
is enough to show that $B(H,G)/\bar{H}_* \to \overline{G/H}_*$ induces a weak equivalence levelwise
in the \emph{other} variable (since taking diagonals is manifestly symmetric in
the two variables). This map is $B(H_n, G_n)/H_n \to G_n/H_n$. It is well-known that
the left hand side is the homotopy orbits of the action of the discrete group
$H_n$ on $G_n$, and the right hand side is the ordinary quotient. They are
weakly equivalent because the action is free.
\end{proof}

To go further, we need to assume that $\A$ is given the structure of a
\emph{representable interval object} \cite[Definition 4.1.1]{asok2015affine-I}.
In this case there is a functor
\[ \Sing_*: \sPre(\scr C)_\pt \to \sPre(\scr C)_\pt \]
with $\Sing_n(\scr X)(c) = \scr X_n(\A^n \times c)$. The functor
$\Sing_*$ preserves objectwise weak equivalences and is in fact a functorial
``$\A$-localization''; in particular it produces $\A$-invariant objects.
All of these properties are mentioned in \cite{asok2015affine-I}, right after
Definition 4.1.4.

\begin{lemma} \label{lemm:Omega-naive-Sing*}
Let $\scr X \in \sPre(\scr C)_\pt$. Then there is a canonical isomorphism
\[ \Omega_\G^\naive \Sing_* \scr X \iso \Sing_* \Omega_\G^\naive \scr X. \]
If $\scr G$ is a presheaf of simplicial groups, then moreover
\[ \Omega_\G^{gr} \Sing_* \scr G \iso \Sing_* \Omega_\G^{gr} \scr G \]
and
\[ \Omega_{\G,\A}^{gr} \Sing_* \scr G \iso \Sing_* \Omega_{\G,\A}^{gr} \scr G \]
\end{lemma}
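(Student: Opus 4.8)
The plan is to prove all three isomorphisms by unwinding the definitions of $\Sing_*$, $\Omega_\G^\naive$, $\Omega_\G^{gr}$, and $\Omega_{\G,\A}^{gr}$ at the level of simplicial degrees, and observing that each side computes the same set in each degree. The key point is that $\Sing_*$ is defined degreewise by $\Sing_n(\scr X)(c) = \scr X_n(\A^n \times c)$, so applying any of the $\G$-loops constructions (which are all defined by a formula involving only evaluation at $\G \times c$ and $c$, possibly with a quotient or kernel) commutes with this reindexing provided we use that $\A^n \times (\G \times c) \cong \G \times (\A^n \times c)$ in $\scr C$ (which follows since $\scr C$ has finite products and products are associative/commutative up to canonical isomorphism).

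First I would treat $\Omega_\G^\naive$. In simplicial degree $n$ and on $c \in \scr C$, the left side $(\Omega_\G^\naive \Sing_* \scr X)_n(c)$ is computed by the cartesian square defining $\Omega_\G^\naive$ applied to the simplicial set $\Sing_*(\scr X)(c)$; in degree $n$ this is the fiber product of $\Sing_n(\scr X)(\G \times c) = \scr X_n(\A^n \times \G \times c)$ over $\Sing_n(\scr X)(c) = \scr X_n(\A^n \times c)$ with a point. The right side $(\Sing_* \Omega_\G^\naive \scr X)_n(c) = (\Omega_\G^\naive \scr X)_n(\A^n \times c)$ is, by the defining square of $\Omega_\G^\naive$ applied at $\A^n \times c$, the fiber product of $\scr X_n(\G \times \A^n \times c)$ over $\scr X_n(\A^n \times c)$ with a point. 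The canonical isomorphism $\G \times \A^n \times c \cong \A^n \times \G \times c$ in $\scr C$ induces a bijection of these fiber products, compatible with the simplicial and restriction maps (here one must also check that the face/degeneracy maps of $\Sing_*$, which involve the interval structure on $\A$, are compatible with the $\G$-coordinate — this is automatic since those structure maps act only on the $\A^n$ factor and the $\G$ factor is carried along by functoriality). Naturality in $c$ is clear, giving the claimed isomorphism of simplicial presheaves.

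Next, the two group-theoretic variants follow by exactly the same bookkeeping: for $\scr G$ a presheaf of simplicial groups, $(\Omega_\G^{gr} \Sing_* \scr G)_n(c) = \scr G_n(\A^n \times \G \times c)/\scr G_n(\A^n \times c)$ and $(\Sing_* \Omega_\G^{gr} \scr G)_n(c) = (\Omega_\G^{gr}\scr G)_n(\A^n \times c) = \scr G_n(\G \times \A^n \times c)/\scr G_n(\A^n \times c)$, and the product-permutation isomorphism in $\scr C$ identifies these compatibly with all structure maps, using that it sends the subgroup $p^*\scr G_n(\A^n \times c)$ to $p^*\scr G_n(\A^n \times c)$. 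For $\Omega_{\G,\A}^{gr}$ one argues identically, now with the subgroup being the image of $\scr G_n(\A \times \A^n \times c)$ under restriction along $\G \hookrightarrow \A$; the permutation isomorphism is compatible with this restriction map as well. One should also remark that $\Sing_*$ of a presheaf of simplicial groups is again a presheaf of simplicial groups (the group structure being degreewise), so all quotients are formed in the same category on both sides.

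I do not expect any serious obstacle here; the statement is essentially a bookkeeping lemma, and the only thing requiring a moment's care is checking that the degeneracy and face operators of $\Sing_*$ — which are built from the coface/codegeneracy maps of the cosimplicial object $\A^\bullet$ coming from the interval structure — interact correctly with the extra $\G$-factor. This is handled by naturality: those operators are induced by morphisms $\A^m \to \A^n$ in $\scr C$, and crossing with $\id_{\G \times c}$ versus $\id_{c}$ and then inserting $\G$ gives a commuting square by functoriality of products. Thus the whole proof is a diagram-chase verifying that three pairs of functors agree on objects and morphisms after the canonical reshuffling of product factors, and I would present it by giving the degree-$n$ formula for each side and pointing to the product-symmetry isomorphism in $\scr C$.
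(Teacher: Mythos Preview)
Your proposal is correct and follows exactly the approach of the paper, which simply states ``Clear from the defining formulas.'' You have merely spelled out in detail the degreewise comparison that the paper leaves implicit.
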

\begin{proof}
Clear from the defining formulas.
\end{proof}

\begin{corollary} \label{corr:Omega-gm-gr-A1}
Let $\scr G \in \sPre(\scr C)_\pt$ be a presheaf of simplicial groups.
Then
$R\Omega_\G \Sing_* \scr G \wequi \Sing_* \Omega_{\G}^{gr} \scr G$.
If furthermore $\scr G$ is $(\G,\A)$-injective, then
$R\Omega_\G \Sing_* \scr G \wequi \Sing_* \Omega_{\G,\A}^{gr} \scr G$.
\end{corollary}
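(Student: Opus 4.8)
The plan is to deduce Corollary~\ref{corr:Omega-gm-gr-A1} by assembling the pieces already in place. The starting point is Proposition~\ref{prop:Omega-gr}: for any presheaf of simplicial groups $\scr G$, part~(2) gives an objectwise weak equivalence $\Omega_\G^\naive(\scr G) \wequi R\Omega_\G(\scr G)$, and part~(1) identifies $\Omega_\G^\naive(\scr G)$ with $\Omega_\G^{gr}(\scr G)$. So for \emph{any} presheaf of simplicial groups we have $R\Omega_\G(\scr G) \wequi \Omega_\G^{gr}(\scr G)$ objectwise.

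First I would apply this with $\scr G$ replaced by $\Sing_* \scr G$, which is again a presheaf of simplicial groups (the simplicial group structure is inherited degreewise since $\Sing_n(\scr G)(c) = \scr G_n(\A^n \times c)$ is a group). This yields $R\Omega_\G(\Sing_* \scr G) \wequi \Omega_\G^{gr}(\Sing_* \scr G)$. Then I invoke Lemma~\ref{lemm:Omega-naive-Sing*}, specifically the isomorphism $\Omega_\G^{gr} \Sing_* \scr G \iso \Sing_* \Omega_\G^{gr} \scr G$, to rewrite the right-hand side as $\Sing_* \Omega_\G^{gr} \scr G$. This gives the first claim, $R\Omega_\G \Sing_* \scr G \wequi \Sing_* \Omega_{\G}^{gr} \scr G$.

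For the second claim, I would additionally use that $\Sing_* \scr G$ is $\A$-invariant (this is one of the asserted properties of $\Sing_*$ recalled just before Lemma~\ref{lemm:Omega-naive-Sing*}). If $\scr G$ is $(\G,\A)$-injective, I need to check that $\Sing_* \scr G$ is again $(\G,\A)$-injective: for each $c$ and each simplicial degree $n$, the map $\Sing_n(\scr G)(\A \times c) = \scr G_n(\A^n \times \A \times c) \to \scr G_n(\A^n \times \G \times c) = \Sing_n(\scr G)(\G \times c)$ is pullback along $\A^n \times \G \times c \to \A^n \times \A \times c$, which factors through the relevant instance of $\scr G_n(\A \times (\text{--})) \to \scr G_n(\G \times (\text{--}))$ applied with $\A^n \times c$ in place of $c$ — so injectivity is inherited. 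With $\Sing_* \scr G$ now both $\A$-invariant and $(\G,\A)$-injective, Proposition~\ref{prop:Omega-gr}(3) supplies an objectwise weak equivalence $\Omega_\G^{gr}(\Sing_* \scr G) \wequi \Omega_{\G,\A}^{gr}(\Sing_* \scr G)$, and the second half of Lemma~\ref{lemm:Omega-naive-Sing*} rewrites the target as $\Sing_* \Omega_{\G,\A}^{gr} \scr G$. Combining with the first claim gives $R\Omega_\G \Sing_* \scr G \wequi \Sing_* \Omega_{\G,\A}^{gr} \scr G$.

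I do not anticipate a genuine obstacle here — the corollary is a formal consequence of the preceding results — but the one point requiring a moment's care is the stability of the hypotheses ($\A$-invariance and $(\G,\A)$-injectivity) under $\Sing_*$, since Proposition~\ref{prop:Omega-gr}(3) is only applicable once those are known for the object to which it is applied. The $\A$-invariance is cited from \cite{asok2015affine-I}, and the $(\G,\A)$-injectivity is the short degreewise pullback argument sketched above; with both in hand the rest is bookkeeping.
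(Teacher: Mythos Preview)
Your proposal is correct and follows the same route as the paper: apply Proposition~\ref{prop:Omega-gr}(1,2) to $\Sing_*\scr G$, commute $\Sing_*$ past $\Omega_\G^{gr}$ via Lemma~\ref{lemm:Omega-naive-Sing*}, and for the second claim invoke Proposition~\ref{prop:Omega-gr}(3) after noting that $\Sing_*$ produces $\A$-invariant objects and preserves $(\G,\A)$-injectivity. Your explicit check of the latter (reducing to the hypothesis with $\A^n\times c$ in place of $c$) is a detail the paper only asserts; otherwise the arguments match.
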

\begin{proof}
We have
\[ R\Omega_\G \Sing_* \scr G \wequi \Omega_\G^{gr} \Sing_* \scr G
   \iso \Sing_* \Omega_\G^{gr} \scr G, \]
where the first step is by Proposition \ref{prop:Omega-gr}(1,2) and the second
step is by Lemma \ref{lemm:Omega-naive-Sing*}. This proves the first claim.
If $\scr G$ is $(\G, \A)$-invariant, we have furthermore
\[ \Omega_\G^{gr} \Sing_* \scr G \wequi \Omega_{\G,\A}^{gr} \Sing_* \scr G
   \iso \Sing_* \Omega_{\G,\A}^{gr} \scr G, \]
where the first step is by Proposition \ref{prop:Omega-gr}(3), using that $\Sing_*$ produces
$\A$-invariant objects and preserves $(\G,\A)$-injective objects,
and the second step is by Lemma \ref{lemm:Omega-naive-Sing*} again. This proves
the second claim.
\end{proof}

\subsection*{Specialisation to $\A^1$-algebraic topology} We now consider the
situation where $\scr C = \Sm_S^\aff$, $\G = \Gm, \A = \A^1$. Here $S$ is a
Noetherian scheme of finite Krull dimension (in all our applications it will be
the spectrum of a field), and $\Sm_S^\aff$
denotes the category of smooth, finite type, (relative) affine $S$-schemes.

We write $L_\mot \sPre(\Sm_S^\aff)$ for the motivic localization of the model
category $\sPre(\Sm_S^\aff)$ (with its injective global model structure); in
other words the localization inverting $\A^1$-homotopy equivalences and the
distinguished Nisnevich squares (equivalently, the Nisnevich-local weak
equivalences \cite[Lemma 3.1.18]{A1-homotopy-theory}).
It is well-known that $L_\mot
\sPre(\Sm_S^\aff)$ is Quillen equivalent to $L_\mot \sPre(\Sm_S)$, the usual
model for the pointed, unstable motivic homotopy category.\footnote{One way of
seeing this is as follows. The model category $L_\mot
\sPre(\Sm_S)$ is Quillen equivalent to $L_\mot \sShv(\Sm_S)$, where $\sShv$
denotes the category of simplicial Nisnevich-sheaves \cite[Theorem
II.5.9]{jardine-local-homotopy}, and similarly
for $L_\mot \sPre(\Sm_S^\aff)$. But the categories $\sShv(\Sm_S)$ and
$\sShv(\Sm_S^\aff)$ are equivalent, because $\Sm_S$ and $\Sm_S^\aff$ define the
same site.}  We write $L_\mot:
\sPre(\Sm_S^\aff) \to \sPre(\Sm_S^\aff)$ for a fibrant replacement functor for
the motivic model structure.

Let us note right away that $(\G, \A) = (\Gm, \A^1)$-injectivity is common in
our situation.

\begin{lemma} \label{lemm:Gm-A1-injectivity}
If $\scr X \in \sPre(\Sm_S^\aff)$ is represented by a separated $S$-scheme, then
$\scr X$ is $(\Gm,\A^1)$-injective.
\end{lemma}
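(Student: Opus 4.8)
The plan is to unwind the definition of $(\Gm, \A^1)$-injectivity. By definition, we must show that for every $c \in \Sm_S^\aff$, the restriction map $\scr X(\A^1 \times c) \to \scr X(\Gm \times c)$ is injective, where $\Gm \to \A^1$ is the canonical open immersion. Since $\scr X$ is represented by a separated $S$-scheme $Y$, and $c$ is affine, both $\A^1 \times c$ and $\Gm \times c$ are affine schemes; write $A = \scr O(\A^1 \times c)$ and note that $\scr O(\Gm \times c) = A[t^{-1}]$ is a localization of $A = \scr O(c)[t]$ at the element $t$. The map in question is thus $\Hom_S(\A^1 \times c, Y) \to \Hom_S(\Gm \times c, Y)$, induced by precomposition with the dominant open immersion $\Gm \times c \hookrightarrow \A^1 \times c$.

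First I would reduce to a scheme-theoretic density statement: the open immersion $j: \Gm \times c \hookrightarrow \A^1 \times c$ is schematically dense, because $\A^1 \times c$ is reduced (it is smooth over $S$... actually we don't even need smoothness, only that $c$ is an integral-enough scheme — but in any case $\scr O(c)[t]$ has no $t$-torsion, so $A \to A[t^{-1}]$ is injective, which is exactly schematic density of $j$). Then I would invoke the standard fact that if $j: U \to Z$ is a schematically dense open immersion and $Y$ is a separated $S$-scheme, then the restriction map $\Hom_S(Z, Y) \to \Hom_S(U, Y)$ is injective: given two $S$-morphisms $f, g: Z \to Y$ agreeing on $U$, the equalizer of $f$ and $g$ is a closed subscheme of $Z$ (here separatedness of $Y$ is used, so that the diagonal of $Y$ is a closed immersion), and it contains the schematically dense open $U$, hence equals all of $Z$; thus $f = g$.

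Finally, since $\Sing_*$ is built levelwise from $\scr X$ evaluated on products with affine spaces, and since the above argument applies verbatim to each simplicial level, I would also remark (if needed later, e.g. in Corollary \ref{corr:Omega-gm-gr-A1}) that $(\Gm, \A^1)$-injectivity passes to $\Sing_* \scr X$; but strictly for the statement as given this is not required. The main obstacle — really the only non-formal point — is correctly identifying the scheme-theoretic content: that $\Gm \times c \hookrightarrow \A^1 \times c$ is schematically dense whenever $c$ is affine, which boils down to $t$ being a non-zerodivisor in $\scr O(c)[t]$, and that separatedness of the target is precisely what upgrades "agree on a dense open" to "agree". Both are standard, so the proof will be short.
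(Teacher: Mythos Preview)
Your proposal is correct and follows essentially the same route as the paper: separatedness of the target makes the equalizer of two maps a closed subscheme, and density of $\Gm \times c$ in $\A^1 \times c$ forces this equalizer to be everything. The paper phrases the density step as ``agree on its closure, which is all of $\A^1 \times U$''; your observation that $t$ is a non-zerodivisor in $\scr O(c)[t]$ (hence the open immersion is schematically dense) is the clean way to make that step precise without appealing to reducedness of $c$, and is a welcome sharpening of the same argument.
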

\begin{proof}
By definition the diagonal $\scr X \to \scr X \times_S \scr X$ is a closed
immersion, whence any two maps $f, g: \A^1 \times U \to \scr X$ over $S$
which agree on $\Gm \times U$ must agree on its closure, which is all of $\A^1
\times U$. In other words, the restriction is injective. This was to be shown.
\end{proof}

We can now state the main result of this section.

\begin{proposition} \label{prop:loops-main}
Let $k$ be an \infinite field and $G$ an isotropic reductive $k$-group. Then
\[ R\Omega_\Gm L_\mot G \wequi \Sing_* \Omega_\Gm^\naive G \iso \Sing_*
    \Omega_\Gm^{gr} G \wequi \Sing_* \Omega_{\Gm,\A^1}^{gr} G, \]
where $\wequi$ means (global) weak equivalence in $\sPre(\Sm_k^\aff)_*$.
\end{proposition}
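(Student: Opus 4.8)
The plan is to read off the chain of equivalences from the results already established in this section, the only genuinely new input being that an isotropic reductive group $G$, viewed as a presheaf of (discrete, hence constant simplicial) groups on $\Sm_k^\aff$, satisfies the hypotheses we need — in particular $\A^1$-invariance after applying $\Sing_*$ — and that $L_\mot G$ may be replaced by $\Sing_* G$ for the purpose of computing $\Gm$-loops.

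First I would observe that $G$ is represented by a separated (indeed affine) $k$-scheme, so Lemma~\ref{lemm:Gm-A1-injectivity} gives that $G$ is $(\Gm,\A^1)$-injective; since $\Sing_*$ preserves $(\Gm,\A^1)$-injective objects (as noted after Lemma~\ref{lemm:Omega-naive-Sing*} and used in Corollary~\ref{corr:Omega-gm-gr-A1}), so is $\Sing_* G$. Next, the key geometric input: for $G$ isotropic reductive over an \infinite{} field, the simplicial presheaf $\Sing_* G$ is already motivically fibrant, or at least the canonical map $\Sing_* G \to L_\mot G$ is a (global) weak equivalence. This is precisely where I would invoke \cite{asok2015affine}: their affine representability / strict $\A^1$-invariance results show that $\Sing_* G$ is Nisnevich-local and $\A^1$-invariant, hence motivically fibrant up to weak equivalence. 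Granting this, $R\Omega_\Gm L_\mot G \wequi R\Omega_\Gm \Sing_* G$ because $R\Omega_\Gm$ is an invariant of the motivic homotopy type (it is the total right derived functor of a right Quillen functor for the motivic model structure, since $\Gm$ is cofibrant), and it is computed by applying $\Omega_\Gm^\naive$ to a motivically fibrant replacement — which $\Sing_* G$ already is.

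Then I would apply Corollary~\ref{corr:Omega-gm-gr-A1} directly to the presheaf of simplicial groups $\scr G = G$: its first assertion gives $R\Omega_\Gm \Sing_* G \wequi \Sing_* \Omega_\Gm^{gr} G$, and its second assertion, using the $(\Gm,\A^1)$-injectivity just established, gives $R\Omega_\Gm \Sing_* G \wequi \Sing_* \Omega_{\Gm,\A^1}^{gr} G$. The remaining isomorphism $\Sing_* \Omega_\Gm^\naive G \iso \Sing_* \Omega_\Gm^{gr} G$ is Proposition~\ref{prop:Omega-gr}(1) combined with Lemma~\ref{lemm:Omega-naive-Sing*} (which commutes $\Sing_*$ past $\Omega_\Gm^\naive$ and $\Omega_\Gm^{gr}$), and the identification of $\Omega_\Gm^\naive \Sing_* G$ with $\Sing_* \Omega_\Gm^\naive G$ is again Lemma~\ref{lemm:Omega-naive-Sing*}. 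Assembling these yields the displayed chain.

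The main obstacle is the step asserting that $\Sing_* G$ is motivically fibrant (equivalently, that $\Sing_* G \to L_\mot G$ is a weak equivalence): this is not formal and rests entirely on the deep affine-representability theorems of Asok--Hoyois--Wendt for isotropic reductive groups — specifically that $G$ is $\A^1$-naive, i.e. that its $\Sing_*$ construction already computes the right motivic mapping spaces out of smooth affine schemes — which also explains why we must restrict to $\Sm_k^\aff$ and to \infinite{} $k$. Everything else is bookkeeping with the formal properties of $\Sing_*$, $\Omega_\Gm^\naive$, $\Omega_\Gm^{gr}$, and $\Omega_{\Gm,\A^1}^{gr}$ already assembled above.
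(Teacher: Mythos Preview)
Your proposal is correct and follows essentially the same route as the paper: the key external input is the Asok--Hoyois--Wendt result that $\Sing_* G \to L_\mot G$ is a global weak equivalence for isotropic reductive $G$, after which Lemma~\ref{lemm:Gm-A1-injectivity}, Lemma~\ref{lemm:Omega-naive-Sing*}, and Corollary~\ref{corr:Omega-gm-gr-A1} assemble the chain exactly as you describe. The only minor wobble is your phrasing of why $R\Omega_\Gm L_\mot G \wequi R\Omega_\Gm \Sing_* G$: you don't need $\Sing_* G$ to be literally motivically fibrant, since $R\Omega_\Gm$ here is derived in the injective (objectwise) model structure and hence already preserves the global weak equivalence $\Sing_* G \wequi L_\mot G$.
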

\begin{proof}
The main point is that under our assumptions, $L_\mot G \wequi \Sing_* G$
\cite[Theorem 4.3.1 and Definiton 2.1.1]{asok2015affine}.
Also $G$ is affine \cite[Definition 3.1.1]{asok2015affine},
so separated, whence $(\Gm, \A^1)$-injective by
Lemma \ref{lemm:Gm-A1-injectivity}. The result now follows from
Lemma \ref{lemm:Omega-naive-Sing*} and Corollary
\ref{corr:Omega-gm-gr-A1}.
\end{proof}

\section{Affine Grassmannians}
\label{sec:grassmann}
We review some basic results about affine Grassmannians. Surely they are all
well-known to workers in the field (i.e., not the author). Our main reference is
\cite{zhu2016introduction}. Throughout, we fix a
field $k$ and write $\Aff_k$ for the category of all affine $k$-schemes (not
necessarily of finite type, not necessarily smooth). We extensively work in the
category $\Pre(\Aff_k)$ of presheaves on affine schemes; as is well-known we
have the Yoneda embedding $\Sch_k \to \Pre(\Aff_k)$. On $\Pre(\Aff_k)$ we have
many topologies, the most relevant for us are the \emph{fpqc} topology
\cite[Tag 03NV]{stacks-project} and the Zariski topology; we denote the relevant sheafification
functors by $a_\fpqc$ (which may not always exist!)
and $a_\Zar$. For elements $\scr F \in \Pre(\Aff_k)$ and
$A$ any $k$-algebra, we put $\scr F(A) := \scr F(Spec(A))$.

\begin{definition}
Let $\scr X \in \Pre(\Aff_k)$ be a presheaf. We have the presheaves $L^+ \scr X,
L \scr X \in \Pre(\Aff_k)$ defined by
\[ L^+ \scr X(A) = \scr X(A\fpsr{t}) \]
and
\[ L \scr X(A) = \scr X(A\flsr{t}). \]
Note that there is a canonical morphism
$L^+ \scr X \to L \scr X$ induced by $A\fpsr{t} \to A\flsr{t}$.

Let $\scr G \in \Pre(\Aff_k)$ be a presheaf of groups. Then $L^+ \scr G, L\scr
G$ are presheaves of groups and we define the \emph{affine Grassmannian} as
\[ \Gr{\scr G} = a_\fpqc L\scr G / L^+ \scr G. \]
\end{definition}

We note right away that at least if $\scr G$ is represented by a group scheme,
then $\Gr{\scr G} = a_\fpqc L\scr G / L^+ \scr G$ exists and is given by $a_\et
L\scr G / L^+ \scr G$ \cite[Proposition 1.3.6, Lemma
1.3.7]{zhu2016introduction}.
Let us further put $L_0 \scr G(A) = \scr G(A[t, t^{-1}])$ and $L_0^+ \scr G(A) =
\scr G(A[t])$. Then we have a commutative square
\begin{equation} \label{eq:L-L+-square}
\begin{CD}
L_0 \scr G @>>> L\scr G \\
@AAA              @AAA  \\
L_0^+ \scr G @>>> L^+ \scr G.
\end{CD}
\end{equation}
The main result of this section is the following. See also Proposition
\ref{prop:aff-grass-Zar-2} at the end of this section for a related and
sometimes stronger result.

\begin{proposition} \label{prop:aff-grass-Zar}
Let $G$ be a split reductive group over an infinite field $k$.
Then the canonical map
\[ a_\Zar L_0 G / L_0^+ G \to \Gr{G} \]
induced by \eqref{eq:L-L+-square} is an isomorphism (of objects in
$\Pre(\Aff_k)$).
\end{proposition}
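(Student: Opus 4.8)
The plan is to factor the map through intermediate sheafifications and reduce everything to two geometric facts: that $L_0 G \to LG$ and $L_0^+ G \to L^+ G$ behave well after suitable sheafification, and that passing from $L_0 G/L_0^+ G$ to the quotient sheaf commutes with these identifications. First I would observe that both $a_\Zar(L_0 G/L_0^+ G)$ and $\Gr{G} = a_{\fpqc}(LG/L^+G) = a_{\et}(LG/L^+G)$ are sheaves, and that a map of sheaves is an isomorphism iff it is a stalk-wise (or local-section-wise, after a cover) isomorphism; so it suffices to produce, for each strictly Henselian (or at least each local) $k$-algebra $A$ — or more robustly, after an étale cover — a bijection $L_0 G(A)/L_0^+ G(A) \xrightarrow{\sim} LG(A)/L^+G(A)$ compatible with the structure maps. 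The key inputs are: (i) for $A$ local, $L_0^+ G(A) = G(A[t]) \to G(A\fpsr{t}) = L^+ G(A)$ and $L_0 G(A) = G(A[t,t^{-1}]) \to G(A\flsr{t}) = LG(A)$; and (ii) the Cartan/Iwasawa-type decomposition $LG = L^+ G \cdot (\text{lattice representatives}) \cdot L^+ G$, whose representatives (coweights $t^\lambda$ for split $G$) already live in $L_0 G$.

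The main technical step, and the one I expect to be the real obstacle, is showing that the natural map $L_0 G/L_0^+ G \to LG/L^+ G$ becomes an isomorphism of \emph{fpqc} (equivalently étale) sheaves — concretely, that every $R$-point of $LG/L^+G$ lifts, fpqc-locally, to a point coming from $L_0 G$, and that two points of $L_0 G$ with the same image become equal in $L_0 G/L_0^+ G$ after an fpqc cover. For the second (injectivity) part: if $g_1, g_2 \in G(R[t,t^{-1}])$ satisfy $g_1 = g_2 h$ with $h \in G(R\fpsr{t})$, then $h = g_2^{-1}g_1 \in G(R[t,t^{-1}]) \cap G(R\fpsr{t})$; the claim is that this intersection, taken inside $G(R\flsr{t})$, is exactly $G(R[t])$. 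This is where one uses that $G$ is affine, so $G \hookrightarrow \A^N$ as a closed subscheme cut out by polynomials, reducing the statement to: a Laurent polynomial in $t$ over $R$ that is also a power series over $R$ must be a polynomial — which is immediate coefficient-wise. For surjectivity: given $g \in G(R\flsr{t})$, use the affine Grassmannian's structure — for split $G$ the lattice $LG/L^+G$ is an increasing union of the $L^+G$-orbits $\Gr{G}^{\leq \lambda}$, each of which is a projective $k$-variety (the affine Schubert varieties), and their $R$-points for $R$ local, by an argument of the type in \cite[\S 1.3–1.4]{zhu2016introduction}, are hit after an fpqc (étale) cover by elements of $G(R[t,t^{-1}])$; alternatively one invokes ind-representability of $\Gr{G}$ by ind-projective schemes of finite type over $k$, which are in particular of finite presentation, so their functor of points is already determined on finitely generated $k$-algebras where $R\flsr{t}$-valued data can be truncated to $R[t,t^{-1}]$-valued data.

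Having the fpqc-sheaf isomorphism $a_{\fpqc}(L_0 G/L_0^+ G) \xrightarrow{\sim} \Gr{G}$ in hand, the last step is to upgrade the target of sheafification from fpqc to Zariski, i.e. to show $a_\Zar(L_0 G/L_0^+ G) \to a_{\fpqc}(L_0 G/L_0^+ G)$ is already an isomorphism. This is the place to invoke the hypotheses that $k$ is \emph{infinite} and $G$ is \emph{split}: by the results of \cite{asok2015affine} (as already used in Section \ref{sec:Gm-loops}, e.g. the singular construction computing $L_\mot G \wequi \Sing_* G$, together with the affine representability theory there), the presheaf quotient $L_0 G/L_0^+ G$ — which on $X$ is $G(X[t,t^{-1}])/G(X)$ by an argument as in Lemma \ref{lemm:simplicial-group-subgroup} applied to $X[t] \supset X$ and $X[t,t^{-1}]$ — already satisfies Nisnevich (indeed Zariski) descent in the relevant range, so no further fpqc sheafification is needed beyond the Zariski one; said differently, the affine Grassmannian's ind-projectivity over the infinite field $k$ forces its orbit stratification to be Zariski-locally trivial, making Zariski and fpqc sheafifications of the orbit-wise presheaf agree. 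I would assemble these pieces into: $a_\Zar(L_0 G/L_0^+ G) \cong a_{\fpqc}(L_0 G/L_0^+ G) \cong a_{\fpqc}(LG/L^+ G) = \Gr{G}$, with the composite equal to the map induced by \eqref{eq:L-L+-square}.
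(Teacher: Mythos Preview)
Your outline correctly isolates the two ingredients: (a) the identification $a_{\fpqc}(L_0 G/L_0^+ G)\cong \Gr{G}$, and (b) the comparison $a_\Zar = a_{\fpqc}$ on this presheaf. Step (a) is essentially Beauville--Laszlo gluing, and your injectivity argument (the intersection $G(R[t,t^{-1}])\cap G(R\fpsr{t})=G(R[t])$ inside $G(R\flsr{t})$) is the right one. But step (b) is the entire content of the proposition, and your argument for it is a gap.

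Concretely: you invoke the results of \cite{asok2015affine} on $L_\mot G\simeq \Sing_* G$ and Lemma~\ref{lemm:simplicial-group-subgroup} to conclude that $L_0 G/L_0^+ G$ ``already satisfies Nisnevich (indeed Zariski) descent''. Neither reference says this. The Asok--Hoyois--Wendt input is about $\A^1$-invariance of $\Sing_* G$ and Nisnevich-local triviality of torsors over \emph{smooth} schemes; it does not compare Zariski and fpqc sheafifications of $L_0G/L_0^+G$ on all of $\Aff_k$. (Also note a slip: $L_0^+G(X)=G(X[t])$, not $G(X)$; the presheaf in question is $X\mapsto G(X[t,t^{-1}])/G(X[t])$, which is \emph{not} the presheaf $\Omega_\Gm^{gr}G$ from Section~\ref{sec:Gm-loops}, and Lemma~\ref{lemm:simplicial-group-subgroup} only gives a weak equivalence of simplicial objects under $\A^1$-invariance hypotheses, not an isomorphism of presheaves of sets.) The sentence ``ind-projectivity \dots\ forces the orbit stratification to be Zariski-locally trivial'' is an assertion of exactly what must be proved.

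What the paper actually does is show directly that $L_0 G \to \Gr{G}$ is a Zariski-locally trivial $L_0^+G$-torsor, which by Lemma~\ref{lemm:torsor-quotient} gives the result. The trivializing Zariski cover is built from the \emph{big cell}: the multiplication map $L^{<0}G\times L_0^+G\to L_0 G$ is a monomorphism and sits in a cartesian square over the open immersion $j\colon L^{<0}G\hookrightarrow \Gr{G}$. Translating by elements $A\in L_0G(k)$ produces open immersions $j_A\colon L^{<0}G\hookrightarrow \Gr{G}$ over each of which $L_0G\to\Gr{G}$ is trivial; the hypothesis that $k$ is infinite is used (via the Bruhat decomposition, Lemma~\ref{lemm:timo}) to guarantee that these $k$-rational translates already cover $\Gr{G}$ on $\bar k$-points, hence Zariski-cover the ind-scheme. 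None of this comes from the homotopical machinery of Section~\ref{sec:Gm-loops}. (The paper also records an alternative route through Fedorov's Grothendieck--Serre theorem, closer in spirit to the torsor-triviality statements you allude to; if you want to make your sketch work, that is the result you would need to cite, not \cite{asok2015affine}.)
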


Before giving the proof, we need some background material. If $\tau$ is a
topology, we call a morphism of
presheaves $f: \scr X \to \scr Y$ a $\tau$-epimorphism if $a_\tau f$ is an
epimorphism in the topos of $\tau$-sheaves.

\begin{definition} \label{def:torsor}
Let $\scr G \in \Pre(\Aff_k)$ be a presheaf of groups acting on $\scr X \in
\Pre(\Aff_k)$. Suppose given a $\scr G$-equivariant map $f: \scr X \to \scr Y$, where
$\scr Y \in \Pre(\Aff_k)$ has the trivial $\scr G$-action. Let $\tau$ be a
topology on $\Aff_k$. We call $f$ a $\tau$-locally trivial $\scr G$-torsor if:
\begin{enumerate}
\item $\scr G, \scr X, \scr Y$ are $\tau$-sheaves,
\item $f$ is a $\tau$-epimorphism, and
\item the canonical map $\scr G \times \scr X \to \scr X \times_{\scr Y} \scr X,
  (g, x) \mapsto (x, gx)$ is an isomorphism.
\end{enumerate}
\end{definition}
Let us note that condition (1) implies that $\scr G \times \scr X$ and $\scr X
\times_{\scr Y} \scr X$ are $\tau$-sheaves, so condition (3) is $\tau$-local.
We call a $\scr G$-torsor \emph{trivial} if there is a $\scr G$-equivariant
isomorphism $\scr X \iso \scr G \times \scr Y$.

\begin{lemma} \label{lemm:torsor-equivalent-definitions}
Suppose that $\scr G$ is a presheaf of groups acting on $\scr X$, and $f: \scr X
\to \scr Y$ is a $\scr G$-equivariant map, where $\scr G$ acts trivially on
$\scr Y$. Suppose that $\scr G, \scr X, \scr Y$ are $\tau$-sheaves.
The following are equivalent.
\begin{enumerate}
\item $f$ is a $\tau$-locally trivial $\scr G$-torsor.
\item For every affine scheme $S$ and every morphism $S \to \scr Y$, there
  exists a $\tau$-cover $\{S_i \to S\}_i$ such that $\scr X_{S_i}$ is a trivial $\scr
  G$-torsor (for every $i$).
\item There exists a $\tau$-epimorphism $\scr U \to \scr Y$ such that
  $\scr X_{\scr U} \to \scr X$ is a trivial $\scr G$-torsor.
\end{enumerate}
\end{lemma}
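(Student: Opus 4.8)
The plan is to prove the three implications (1)$\Rightarrow$(2)$\Rightarrow$(3)$\Rightarrow$(1) in a cycle, each of which is essentially formal once one unwinds the definitions. Throughout, the hypothesis that $\scr G, \scr X, \scr Y$ are $\tau$-sheaves is in force, so sheafification is invisible on these objects and on their (sheaf-theoretic) fibre products; this is what makes the local triviality condition (3) of Definition \ref{def:torsor} stable under the manipulations below.

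For (1)$\Rightarrow$(2): fix an affine scheme $S$ and a map $S \to \scr Y$. Since $f$ is a $\tau$-epimorphism, after passing to a $\tau$-cover $\{S_i \to S\}$ we may lift each composite $S_i \to S \to \scr Y$ to a point $x_i \in \scr X(S_i)$; equivalently $\scr X_{S_i} \to S_i$ admits a section. Now the torsor condition (3) gives an isomorphism $\scr G \times \scr X \xrightarrow{\sim} \scr X \times_{\scr Y} \scr X$, and base-changing along $x_i \colon S_i \to \scr X$ (using $\scr X_{S_i} = S_i \times_{\scr Y}\scr X$) produces a $\scr G$-equivariant isomorphism $\scr G \times S_i \xrightarrow{\sim} \scr X_{S_i}$, $(g,s)\mapsto g\cdot x_i(s)$. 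Hence $\scr X_{S_i}$ is a trivial $\scr G$-torsor over $S_i$, which is (2). The implication (2)$\Rightarrow$(3) is immediate: take $\scr U = \coprod_i S_i$ as $S$ ranges over a generating set of affine schemes mapping to $\scr Y$ (or, more slickly, over all affine schemes over $\scr Y$); the induced map $\scr U \to \scr Y$ is a $\tau$-epimorphism because it is surjective on a generating family after $\tau$-sheafification, and $\scr X_{\scr U} = \coprod_i \scr X_{S_i} \to \scr U$ is trivial since each piece is.

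The implication (3)$\Rightarrow$(1) is the one requiring a little care, since we must verify conditions (2) and (3) of Definition \ref{def:torsor}. Condition (2) there, that $f$ is a $\tau$-epimorphism, follows because $\scr X_{\scr U} \to \scr U$ has a section (being a trivial torsor) and $\scr U \to \scr Y$ is a $\tau$-epimorphism, so the composite $\scr X_{\scr U} \to \scr X \to \scr Y$ is a $\tau$-epimorphism and therefore so is $\scr X \to \scr Y$ (the class of $\tau$-epimorphisms is closed under precomposition-cancellation of this kind). For condition (3), that $\scr G \times \scr X \to \scr X \times_{\scr Y}\scr X$ is an isomorphism: both sides are $\tau$-sheaves, so it suffices to check this after the $\tau$-epimorphism $\scr U \to \scr Y$, i.e.\ to check that $\scr G \times \scr X_{\scr U} \to \scr X_{\scr U}\times_{\scr U}\scr X_{\scr U}$ is an isomorphism; but $\scr X_{\scr U} \cong \scr G \times \scr U$ equivariantly, and for the trivial torsor the shear map $\scr G \times (\scr G \times \scr U) \to (\scr G\times\scr U)\times_{\scr U}(\scr G\times\scr U)$, $(g,(h,u))\mapsto ((h,u),(gh,u))$, is manifestly bijective. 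This completes the cycle.

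The main obstacle, such as it is, will be bookkeeping around condition (3) of Definition \ref{def:torsor} in the last implication: one must be careful that ``isomorphism of $\tau$-sheaves'' can be checked $\tau$-locally on the base $\scr Y$, which is why the explicit hypothesis that $\scr G,\scr X,\scr Y$ (and hence the relevant fibre products) are already $\tau$-sheaves is essential — without it one would be comparing sheafifications and the local check would need the exactness of $a_\tau$. Everything else is a routine unwinding of the Yoneda/torsor formalism; I would keep the write-up short and emphasise only the base-change step producing the trivialisation $\scr G\times S_i \xrightarrow{\sim}\scr X_{S_i}$ from the shear isomorphism, since that is the one genuinely content-bearing move.
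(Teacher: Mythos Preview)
Your proof is correct and follows essentially the same route as the paper: the same cycle $(1)\Rightarrow(2)\Rightarrow(3)\Rightarrow(1)$, with the same ingredients (sections from the $\tau$-epimorphism, coproduct over affines for $(2)\Rightarrow(3)$, and checking both torsor conditions $\tau$-locally for $(3)\Rightarrow(1)$). The only difference is expository: you spell out the base-change of the shear map explicitly where the paper simply invokes that torsors are stable under base change and that both conditions can be checked after pullback along an effective epimorphism.
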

\begin{proof}
We will work in the topos of $\tau$-sheaves, so suppress any mention of
$\tau$-sheafification, and also say ``epimorphism'' instead of
``$\tau$-epimorphisms'', and so on.

(1) $\Rightarrow$ (2): Let $S \to \scr Y$ be any map. Since epimorphisms in a
topos are stable under base change (e.g. by universality of colimits),
$\alpha: \scr X_S \to S$ is also a
$\scr G$-torsor, and in particular an epimorphism. There exists then a cover
$\{S_i \to S\}_i$ over which $\alpha$ has a section, being an epimorphism. In other
words, $\scr X_{S_i} \to S'$ is trivial, as required.

(2) $\Rightarrow$ (3): Taking the coproduct $\coprod_{S \to \scr Y} \coprod_i
S_i \to \scr Y$
over a sufficiently large collection of affine schemes $S$ mapping to $\scr Y$,
we obtain a trivializing epimorphism as required.

(3) $\Rightarrow$ (1): We need to prove that $\scr X \to \scr Y$ is an
epimorphism and that $\scr G \times \scr X \to \scr X \times_{\scr Y} \scr X$ is
an isomorphism. Both statements may be checked after pullback along the
(effective) epimorphism $\scr U \to \scr Y$. We may thus assume that $\scr X \to
\scr Y$ is trivial, in which case the result is clear.
\end{proof}

\begin{lemma} \label{lemm:torsor-quotient}
Let $\scr X \to \scr Y$ be a $\tau$-locally trivial $\scr G$-torsor.
Then $\scr Y \iso a_\tau \scr X/\scr G$.
\end{lemma}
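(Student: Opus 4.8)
The plan is to exhibit both $\scr Y$ and $a_\tau(\scr X/\scr G)$ as one and the same coequalizer inside the topos of $\tau$-sheaves; as in the proof of Lemma~\ref{lemm:torsor-equivalent-definitions} I would work in that topos throughout and suppress $a_\tau$ from the notation. The one external fact I would invoke is that every epimorphism in a topos is effective, i.e.\ is the coequalizer of its own kernel pair.

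First I would note that, since $f$ is $\scr G$-equivariant for the trivial action on $\scr Y$, it factors through the presheaf quotient $\scr X/\scr G$, and then, $\scr Y$ being a $\tau$-sheaf, through the canonical map $\varphi\colon a_\tau(\scr X/\scr G) \to \scr Y$; the content of the lemma is that $\varphi$ is an isomorphism. On the source side: colimits of presheaves are computed objectwise, so $\scr X/\scr G$ is the coequalizer in $\Pre(\Aff_k)$ of the projection and the action $\scr G \times \scr X \rightrightarrows \scr X$. Since $a_\tau$ is a left adjoint, and since by condition (1) (together with stability of sheaves under products) $\scr G \times \scr X$ and $\scr X$ are already $\tau$-sheaves, $a_\tau(\scr X/\scr G)$ is the coequalizer of that same pair of maps, now computed in $\tau$-sheaves.

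On the target side: condition (2) says $f$ is a $\tau$-epimorphism, hence an effective epimorphism in the topos, so $\scr Y$ is the coequalizer of its kernel pair $\scr X \times_{\scr Y} \scr X \rightrightarrows \scr X$. By condition (3) the shear map $(g,x)\mapsto (x,gx)$ identifies $\scr G \times \scr X$ with $\scr X \times_{\scr Y} \scr X$, and under this identification the two structure maps become precisely the projection and the action. Hence the two coequalizer diagrams coincide, and comparing universal maps shows that $\varphi$ is the resulting isomorphism.

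The only step requiring care is the bookkeeping in the last paragraph: checking that the shear isomorphism of condition (3) intertwines the two kernel-pair projections with the projection–action pair in the correct order, so that the isomorphism of coequalizers produced this way genuinely is the canonical map $\varphi$. Everything else is formal, and the effectivity of epimorphisms in a topos I would simply cite.
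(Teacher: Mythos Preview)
Your argument is correct and is essentially the same as the paper's: both proofs work in the $\tau$-topos, use that $f$ is an effective epimorphism to present $\scr Y$ as the coequalizer of its kernel pair, and then invoke condition (3) to identify this with the action coequalizer $\scr G \times \scr X \rightrightarrows \scr X$. Your write-up is simply more explicit about the canonical map $\varphi$ and the bookkeeping with the shear isomorphism, which the paper leaves implicit.
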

\begin{proof}
We again work in the topos of $\tau$-sheaves.
By definition $\scr X \to \scr Y$ is an epimorphism. Since every epimorphism in a
topos is effective \cite[Tag 086K]{stacks-project},
we have a coequaliser $\scr X \times_{\scr Y} \scr X \rightrightarrows \scr X
\to \scr Y$ in $\tau$-sheaves.
By condition (3) of Definition \ref{def:torsor}, this is the action coequalizer. The result follows.
\end{proof}

\begin{proof}[Proof of Proposition \ref{prop:aff-grass-Zar}.]
By Lemma \ref{lemm:torsor-quotient}, it suffices to prove that $L_0 G \to
\Gr{G}$ is a Zariski-locally trivial $L_0^+ G$-torsor. All presheaves involved
are fpqc-, and hence Zariski-sheaves.

We shall make use of results from
\cite[Section 2]{zhu2016introduction}. There $k$ is assumed to be algebraically
closed. This will not matter in each case we cite this reference, because the
property we are checking will be fpqc local.

We introduce some additional notation. We denote by $L^- G$ the presheaf $A
\mapsto G(A[t^{-1}])$. We have $L^-G \iso L^+_0 G$, but the canonical embedding
$L^- G \to L_0 G$ is different. Evaluation at $t^{-1} = 0$ induces $L^- G \to G$, and we
let $L^{<0} G = ker(L^- G \to G)$. I claim that the following square is a
pullback, where $i$ is the multipication map
\begin{equation*}
\begin{CD}
L^{<0} G \times L_0^+G @>i>> L_0 G \\
@V{pr_1}VV                     @VVV  \\
L^{<0}G             @>j>> \Gr{G}.
\end{CD}
\end{equation*}
In order to see this, we note first that it follows from \cite[Lemma
3.1]{richarz-test-function} that $i$ is a monomorphism.
Let $\scr F = L^{<0}G \times_{\Gr{G}} L_0 G$. Since $i$ is a mono so is the
canonical map $\alpha: L^{<0}G \times L_0^+G \to \scr F$. Let
$x, y \in \scr F(A)$, corresponding to $x \in L^{<0}G(A)$ and $y \in L_0G(A)$
with the same image in $\Gr{G}(A)$. In other words, fpqc-locally on $A$ we can
find $z \in L_0^+ G(A)$ with $y = xz$. Thus $\alpha$ is fpqc-locally
an epimorphism. It is thus an fpqc-local isomorphism of fpqc-sheaves, and hence
an isomorphism (of presheaves).

Now let $A \in L_0G(k)$. We obtain a map $j_A: L^{<0}G \to \Gr{G}, x \mapsto
A \cdot j(x)$. Define similarly $i_A: L^{<0} G \times L_0^+G \to L_0 G, (x, y)
\mapsto Axy$. Since $A$ is invertible, the following square is also a pullback
\begin{equation*}
\begin{CD}
L^{<0} G \times L_0^+G @>i_A>> L_0 G \\
@V{pr_1}VV                     @VVV  \\
L^{<0}G             @>j_A>> \Gr{G}.
\end{CD}
\end{equation*}
By Lemma \ref{lemm:torsor-equivalent-definitions}, it is thus enough to show
that $j' = \coprod_{A \in L_0 G(k)} j_A$ is a Zariski-epimorphism. Note first that
$j_A$ is a morphism of ind-schemes \cite[Theorem 1.2.2]{zhu2016introduction},
and an open immersion \cite[Lemma 3.1]{richarz-test-function}.
Consequently
each $j_A$ identifies an open ind-subscheme. In order to check that $j'$ is a
Zariski-epimorphism, it suffices to check that the $j_A$ form a covering. Let
$\bar{k}$ denote an algebraic closure of $k$. Since
$\Gr{G}$ is of ind-finite type, it suffices to check this on $\bar{k}$-points.
The result thus follows from Lemma \ref{lemm:timo} below.
\end{proof}

The above proof is complete if $k = \bar{k}$. In the general case, we need the
following result, which is probably well-known to experts. A proof was kindly
communicated by Timo Richarz.

\begin{lemma}\label{lemm:timo}
Let $k$ be an infinite field, $\bar{k}$ an algebraic closure, and $G$ a split
reductive group over $k$. Then $\Gr{G}(\bar{k})$ is covered by the translates $A
L^{<0} G(\bar{k}) \subset \Gr{G}(\bar{k})$, for $A \in L_0 G(k)$.
\end{lemma}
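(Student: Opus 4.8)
The plan is to exploit the affine Bruhat/Cartan decomposition of $\Gr{G}(\bar k)$ together with a density argument to show that the integral loops $A \in L_0 G(k)$ suffice to hit every coset. Recall that for a split reductive group $G$ with maximal split torus $T$, the points of $\Gr{G}$ over an algebraically closed field decompose as $\Gr{G}(\bar k) = \coprod_{\lambda} L_0^+G(\bar k) \cdot t^\lambda$, the union running over dominant coweights $\lambda \in X_*(T)$ (Cartan decomposition), and each $L_0^+G(\bar k)$-orbit $\mathrm{Gr}_\lambda$ is a locally closed, finite-dimensional subvariety. The translates $A\, L^{<0}G(\bar k)$ for $A$ ranging over the \emph{lattice points} $t^\mu$, $\mu \in X_*(T)$, are the "big cells": $t^\mu L^{<0}G$ is an open ind-subscheme (this is exactly \cite[Lemma 3.1]{richarz-test-function}, as used in the proof of Proposition \ref{prop:aff-grass-Zar}), and I would first check that these particular translates already cover $\Gr{G}(\bar k)$ --- equivalently, that every Schubert cell $\mathrm{Gr}_\lambda$ meets $t^\mu L^{<0}G$ for suitable $\mu$. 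This is the classical statement that the big cells centered at the $T$-fixed points $t^\mu$ cover the affine Grassmannian; concretely, the point $t^\lambda$ itself lies in $t^\lambda L^{<0}G(\bar k)$ since $1 \in L^{<0}G(\bar k)$, and a closure/openness argument (each orbit closure $\overline{\mathrm{Gr}_\lambda}$ is a projective variety covered by the finitely many big cells $t^\mu L^{<0}G$ it meets) finishes the reduction.

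\textbf{Reduction to integral $A$.} The subtlety in the lemma is that we are only allowed $A \in L_0 G(k)$, i.e. $A \in G(k[t,t^{-1}])$, not $A \in L G(k) = G(k\flsr t)$ or even $A \in L_0 G(\bar k)$. But the needed translates are by the lattice elements $t^\mu$ with $\mu \in X_*(T)$, and since $T$ is split, the cocharacter $t^\mu: \Gm \to T \subset G$ is defined over $k$ and gives an element of $T(k[t,t^{-1}]) \subset G(k[t,t^{-1}]) = L_0 G(k)$. So the translates $t^\mu L^{<0}G(\bar k)$ that we have shown cover $\Gr{G}(\bar k)$ are already among the allowed translates $A L^{<0}G(\bar k)$, $A \in L_0 G(k)$. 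This is where infiniteness of $k$ and splitness of $G$ enter: splitness guarantees the cocharacter lattice is "rational", and while infiniteness of $k$ is needed elsewhere (via \cite{asok2015affine}), here the essential input is just that the $T$-fixed points of $\Gr{G}$ are $k$-rational, which holds for split $T$ over any field; I would state it for infinite $k$ only to match the ambient hypotheses.

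\textbf{Key steps, in order.} (i) Recall the Cartan decomposition $\Gr{G}(\bar k) = \bigsqcup_\lambda \mathrm{Gr}_\lambda(\bar k)$ and that each $\overline{\mathrm{Gr}_\lambda}$ is a projective $\bar k$-variety. (ii) Recall from \cite[Lemma 3.1]{richarz-test-function} that for each $\mu \in X_*(T)$ the translate $t^\mu L^{<0}G \hookrightarrow \Gr{G}$ is an open immersion, so the $t^\mu L^{<0}G(\bar k)$ form an open cover candidate. (iii) Show they actually cover: it suffices to cover each $\overline{\mathrm{Gr}_\lambda}$, which is quasi-compact, so finitely many big cells suffice, and one checks directly that $t^\lambda \in t^\lambda L^{<0}G(\bar k)$ and that the open cells passing through the various $T$-fixed points in $\overline{\mathrm{Gr}_\lambda}$ exhaust it --- this is the standard fact that the affine Grassmannian is covered by the big cells at its torus-fixed points, e.g. via the Birkhoff/Bruhat decomposition $LG = \bigsqcup_\mu L^{<0}G \cdot t^\mu \cdot L_0^+ G$. (iv) Observe $t^\mu \in T(k[t,t^{-1}]) \subset L_0 G(k)$ since $T$ is $k$-split, so these are legitimate choices of $A$.

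\textbf{The main obstacle} I expect is step (iii): pinning down precisely why the big cells at the lattice points $t^\mu$ (as opposed to all $\bar k$-points of $\Gr{G}$, which is what a naive homogeneity argument would give) already cover. The clean way is the Birkhoff decomposition for $LG$ over an algebraically closed field, which says every element of $LG(\bar k)$ can be written as $u \cdot t^\mu \cdot g$ with $u \in L^{<0}G(\bar k)$, $\mu \in X_*(T)$, $g \in L_0^+ G(\bar k)$; projecting to $\Gr{G} = LG/L_0^+G$ this says exactly that $\Gr{G}(\bar k) = \bigcup_\mu t^{-?}$... --- more precisely that every point is of the form $u t^\mu \cdot L_0^+G$, i.e. lies in $u t^\mu L^{<0}G \cdot$(base), hence in the translate of $L^{<0}G$ by $u t^\mu$; but $u t^\mu$ need not be integral, so one instead uses that $u t^\mu L^{<0}G L_0^+G = u t^\mu L G^{<0}$... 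The cleanest fix, and the one I would adopt, is: the point $p = g \cdot L_0^+G \in \Gr{G}(\bar k)$ lies in some Schubert cell $\mathrm{Gr}_\lambda$, whose closure is covered by finitely many of the open sets $t^\mu L^{<0}G$ (openness from step (ii) plus quasi-compactness of the projective variety $\overline{\mathrm{Gr}_\lambda}$), and then one only needs \emph{nonemptiness} of $\mathrm{Gr}_\lambda \cap t^\mu L^{<0}G$ for \emph{some} $\mu$, which follows because $t^\lambda \in \mathrm{Gr}_\lambda(\bar k) \cap t^\lambda L^{<0}G(\bar k)$ --- and $\lambda \in X_*(T)$ so $t^\lambda$ is one of our integral $A$'s. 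That reduces everything to the evident facts that $t^\lambda \in \mathrm{Gr}_\lambda$ and $t^\lambda L^{<0}G$ is open containing $t^\lambda$, which is all harmless. I would write up the argument in this last form to keep the reliance on [Richarz, Lemma 3.1] explicit and avoid invoking the full Birkhoff stratification.
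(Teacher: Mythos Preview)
Your strategy is genuinely different from the paper's. The paper does \emph{not} try to show that the lattice translates $t^\mu L^{<0}G(\bar k)$ already cover. Instead, using the Bruhat cells $Y_\mu \cong \A^{l(\mu)}$, it first proves that every non-empty open subset of $\Gr{\bar G}$ contains a $k$-rational point (this is where infiniteness of $k$ is actually used). Given $\bar A \in \Gr{G}(\bar k)$ lifted to $A \in L_0G(\bar k)$, it then applies the reversal automorphism $\mathrm{rev}$ (induced by $t \mapsto t^{-1}$) to identify $A\, L_0^+\bar G\, L^-\bar G$ with the preimage of an open in $\Gr{\bar G}$, hence containing some $k$-point $D = ABC$ with $B \in L_0^+G(\bar k)$, $C \in L^-G(\bar k)$; then $\bar A = D\, C^{-1} \cdot e \in D \cdot L^{<0}G(\bar k)$ with $D \in L_0G(k)$. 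So the paper manufactures a bespoke $A \in L_0G(k)$ for each $\bar k$-point via a density argument, rather than using a fixed combinatorial family.

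Your route---show directly that $\{t^\mu L^{<0}G(\bar k)\}_{\mu \in X_*(T)}$ covers, then note $t^\mu \in L_0G(k)$ since $T$ is split---is in principle cleaner and would not need $k$ infinite. But your justification of the covering has a real gap. In the ``cleanest fix'' you claim $\overline{\mathrm{Gr}_\lambda}$ is covered by finitely many $t^\mu L^{<0}G$ ``by openness plus quasi-compactness'', and that one then ``only needs nonemptiness of $\mathrm{Gr}_\lambda \cap t^\mu L^{<0}G$ for some $\mu$''. Neither step is valid: quasi-compactness extracts a finite subcover only \emph{after} a cover is established, and nonemptiness at the single point $t^\lambda$ says nothing about an arbitrary $p \in \mathrm{Gr}_\lambda$. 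The Birkhoff decomposition you invoke is also on the wrong side: it yields $\Gr{G}(\bar k) = \bigsqcup_\mu L^{<0}G(\bar k) \cdot t^\mu$, not a cover by $t^\mu \cdot L^{<0}G(\bar k)$, and these two families of subsets are different since $L^{<0}G$ is not normal in $LG$.

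A correct repair along your lines: each open cell $t^\mu L^{<0}G \cdot e \subset \Gr{G}$ is $T$-stable (for $s \in T$ one has $s\, t^\mu g \cdot e = t^\mu (sgs^{-1}) \cdot e$ with $sgs^{-1} \in L^{<0}G$), and the union $\bigcup_\mu t^\mu L^{<0}G$ contains every $T$-fixed point of $\Gr{G}$ (namely $t^\nu \in t^\nu L^{<0}G$). Hence the complement, intersected with any Schubert variety $\overline{\mathrm{Gr}_\lambda}$, is a closed $T$-stable subvariety of a projective $\bar k$-variety with no $T$-fixed point, so is empty by Borel's fixed-point theorem. With this in place your argument goes through.
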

\begin{proof}
We shall make use of the \emph{Bruhat decomposition} of $\Gr{G}$. Namely, there
exists a set $X$, together with for each $\mu \in X$ an element $t^\mu \in
L_0G(k)$ and a $k$-scheme $U_\mu \subset L_0G$ such that
\begin{enumerate}
\item The canonical map $U_\mu \to \Gr{G}, A \mapsto A t^\mu \cdot e$ is a
  locally closed embedding. Denote the image by $Y_\mu$.
\item There is an isomorphism $U_\mu \iso \mathbb{A}^{l(\mu)}$ for some
  non-negative integer $l(\mu)$.
\item The schemes $Y_\mu \to \Gr{G}$ form a locally closed cover.
\end{enumerate}
We do not know a good reference for the statement in this generality,
but see for example \cite[Theorem 8.6.3]{pressley1984loop}.

It is clear that $L_0 G \to \Gr{G}$ is trivial over $Y_\mu$. We deduce that (1) $L_0G \to
\Gr{G}$ is surjective on $k$-points. Put $\bar{G} = G_{\bar{k}}$.
We claim that (2) any non-empty open
$L_0^+\bar{G}$-orbit in $L_0\bar{G}$
contains (the image of) a $k$-point (of $L_0 G$). 
Using surjectivity on $k$-points, for this it
suffices to prove that any non-empty open $U \subset \Gr{\bar{G}}$ contains a
$k$-rational
point. Being non-empty, $U$ meets $\bar{Y}_\mu := (Y_\mu)_{\bar k}$
for some $\mu$. Then $\bar{V} := \bar{Y}_\mu \cap U$ is a non-empty open subset
of $\bar{Y}_\mu \iso \A^n_{\bar k}$ for some $n$. Its image $V$ in $\A^n_k$ is open
\cite[Tag 0383]{stacks-project} and non-empty. Since $k$ is infinite, $V$ has a
rational point\footnote{This result is widely known and easy to prove, yet we
could not locate a reference. A proof is recorded on MathOverflow at
\cite{264212}.}.
This establishes the claim.

Finally let $\bar{A} \in \Gr{G}(\bar{k})$. By surjectivity on
$\bar{k}$-points (1), we find $A \in L_0G(\bar{k})$ mapping to $\bar{A}$.
Consider the $L^-\bar{G}$-orbit $O = A L_0^+ \bar{G} L^-\bar{G} \subset
L_0 \bar{G}$. I claim that $O$ contains a $k$-point.
The automorphism $\mathrm{rev}: L_0 G \to L_0 G$ induced by $t
\mapsto t^{-1}$ interchanges $L^-$ and $L_0^+$, and hence converts $L^-$
orbits into $L_0^+$-orbits. Since it is defined over $k$ it
preserves $k$-points. It is hence enough to show $\mathrm{rev}(O)$ has a
$k$-point, and by the claim (2) it is enough to show that $\mathrm{rev}(O)$ is
open.
But $\mathrm{rev}(O) = \mathrm{rev}(A) L^-\bar{G} L_0^+ \bar{G}$ which is open, being the preimage of
$\mathrm{rev}(A) L^{<0} \bar{G} \subset \Gr{\bar{G}}$.

We thus find $B \in L_0^+G(\bar{k}), C \in
L^-G(\bar{k})$ such that $ABC \in L_0G(k)$. Then
\begin{gather*}
  \bar{A} = A \cdot e = (ABC) C^{-1} B^{-1} \cdot e = (ABC) C^{-1} \cdot e \\ 
  \in (ABC) L^- G(\bar{k}) \cdot e = (ABC) L^{<0} G(\bar{k}) \cdot e \subset \Gr{G}(\bar{k}).
\end{gather*}
This was to be shown.
\end{proof}

\begin{remark}
There is an alternative proof of Proposition \ref{prop:aff-grass-Zar}, using a
recent result of Fedorov \cite{fedorov2015grothendieck}. Moreover this proof
does not require $k$ to be infinite, or a field. It was also kindly communicated
by Timo Richarz.
\end{remark}
\begin{proof}[Alternative proof of Proposition \ref{prop:aff-grass-Zar}.]
It follows from the Beauville-Laszlo gluing lemma \cite{beauville1995lemme} that
$\Gr{G} \iso a_{fpqc} L_0 G / L_0^+ G \iso T$, where $T$ is the functor sending
$Spec(A)$ to the set of isomorphism classes of tuples $(\scr F, \alpha)$ with
$\scr F$ a $G$-torsor on $\A^1_A$ and $\alpha$ a trivialization of $\scr F$ over
$\A^1_A \setminus \{0\}$. The map $L_0 G \to T$ sends $M \in L_0 G(A)$ to the
pair $(\scr F^0, \alpha_M)$, where $\scr F^0$ is the trivial $G$-torsor and
$\alpha_M$ is the trivialization induced by $M$. By Lemmas
\ref{lemm:torsor-equivalent-definitions} and \ref{lemm:torsor-quotient}, what we
need to show is that this map $L_0 G \to T$ admits sections Zariski-locally on
$T$. In other words if $Spec(A) \in \Aff_k$ and $(\scr F, \alpha) \in T(A)$,
then the $G$-torsor $\scr F$ over $\A^1_A$ is Zariski-locally on $A$ trivial.

If
$A$ is Noetherian local, this is \cite[Theorem 2]{fedorov2015grothendieck}.
We
need to extend this to more general $A$, so let $Spec(A) \in \Aff_k$ and $(\scr
F, \alpha) \in T(A)$ be
arbitrary. We may write $A$ as a filtering colimit of Noetherian rings $A_i$.
Since $\Gr{G}$ is of ind-finite type, we find $(\scr F', \alpha') \in T(A_i)$
for some $i$ inducing $(\scr F, \alpha)$. Thus we may assume that $A$ is
Noetherian. Fedorov's result assures us that $\scr F$ is trivial over any local
ring of $A$. Thus what remains to show is that triviality of $\scr F$ on
$\A^1_A$ (or equivalently $\P^1_A$) is an open condition on $Spec(A)$. This is proved
in \cite[proof of Lemma 3.1]{richarz-test-function}.
\end{proof}

We can also prove the following related result, tailored to our narrower
applications.

\begin{proposition} \label{prop:aff-grass-Zar-2}
Let $G$ be an isotropic reductive group over an \infinite field $k$.
Then the canonical map
\[ a_\Zar L_0 G / L_0^+ G \to \Gr{G} \]
induced by \eqref{eq:L-L+-square} is an isomorphism on sections over smooth
affine varieties.
\end{proposition}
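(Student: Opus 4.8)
The plan is to run the same torsor-theoretic argument as in the proof of Proposition \ref{prop:aff-grass-Zar}, but only after restricting to smooth affine test schemes, and to replace the input Lemma \ref{lemm:timo} (which required $G$ split) with the analogous geometric fact for isotropic $G$, available on smooth affine bases. By Lemmas \ref{lemm:torsor-equivalent-definitions} and \ref{lemm:torsor-quotient}, the statement is equivalent to: for every \emph{smooth} affine $k$-variety $S=\mathrm{Spec}(A)$ and every map $S\to\Gr{G}$, the corresponding $L_0^+G$-torsor $L_0G\times_{\Gr{G}}S\to S$ is Zariski-locally trivial; equivalently, by the Beauville–Laszlo/torsor description used in the alternative proof, every $G$-torsor $\scr F$ on $\A^1_A$ trivialized over $\A^1_A\setminus\{0\}$ is Zariski-locally trivial on $S$. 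So the first step is to reduce to this statement about $G$-torsors on $\A^1$ over a smooth affine base, being careful to note that all sheaves in sight are still Zariski sheaves so that the quotient-is-sheafification lemmas apply verbatim.

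The second step is to produce the trivialization Zariski-locally on $S$. Since $A$ is now the coordinate ring of a smooth affine $k$-variety, $\A^1_A$ is smooth affine over $k$, and each local ring of $\A^1_A$ is regular. Here is where isotropy enters: the relevant version of the Grothendieck–Serre conjecture for isotropic reductive groups over regular (or smooth affine) bases — precisely the kind of statement proved in \cite{asok2015affine} and the surrounding Grothendieck–Serre literature — tells us that a $G$-torsor on a regular ring that is generically trivial (which $\scr F$ is, since it is trivial away from the divisor $\{t=0\}$, in particular at the generic point of $\A^1_A$ when $A$ is a domain, and more generally on a dense open) is in fact Zariski-locally trivial. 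One then transports this along the structure map $\A^1_A\to S$: triviality of $\scr F$ on $\A^1_{A_\mathfrak{p}}$ for each prime $\mathfrak p$ of $A$ follows, and, exactly as in the alternative proof of Proposition \ref{prop:aff-grass-Zar}, triviality of $\scr F$ on $\A^1_A$ (equivalently $\P^1_A$) is an open condition on $\mathrm{Spec}(A)$ by \cite[proof of Lemma 3.1]{richarz-test-function}. This yields a Zariski cover of $S$ over which $\scr F$ is trivial, which is what we need.

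The main obstacle — and the only place requiring genuine care — is the second step: matching the precise hypotheses of the available isotropic Grothendieck–Serre type statement with our situation. One must ensure the result applies to $\A^1_A$ for $A$ smooth affine over the \infinite field $k$ (so that the regularity and, if needed, essential-smoothness-over-$k$ hypotheses hold), and that the torsor $\scr F$, trivial over $\A^1_A\setminus\{0\}$, genuinely satisfies the ``generically trivial'' hypothesis in the form demanded — this is immediate when $A$ is a domain but in general one reduces to irreducible components, using that triviality can be checked locally and that the components of a smooth variety are disjoint. A secondary subtlety is the reduction step itself: if one works through the quotient presheaf $a_\Zar L_0G/L_0^+G$ directly rather than via Beauville–Laszlo, one must re-examine the pullback-square argument from the proof of Proposition \ref{prop:aff-grass-Zar} and check that restricting the base to smooth affine schemes does not break the identification of $\scr F$ with a product — but since that argument was fpqc-local and purely formal, it survives unchanged, and the only substantive new ingredient is the isotropic case of the trivialization of torsors on $\A^1$ over a smooth affine base.
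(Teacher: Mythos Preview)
Your reduction is correct and matches the paper: via Beauville--Laszlo and Lemmas \ref{lemm:torsor-equivalent-definitions}, \ref{lemm:torsor-quotient}, the statement becomes that for $X=\mathrm{Spec}(A)$ smooth affine over $k$, any $G$-torsor $\scr F$ on $\A^1_X$ trivial over $\A^1_X\setminus\{0\}$ is, Zariski-locally on $X$, trivial.

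The gap is in your second step. Grothendieck--Serre (Fedorov--Panin \cite{fedorov2015proof, panin2017proof}) applied to the regular scheme $\A^1_X$ gives that $\scr F$ is Zariski-locally trivial \emph{on $\A^1_X$}. You then assert that ``triviality of $\scr F$ on $\A^1_{A_\mathfrak{p}}$ for each prime $\mathfrak p$ of $A$ follows'', but this does not follow: Zariski-local triviality on $\A^1_{A_\mathfrak{p}}$ is far weaker than triviality on all of $\A^1_{A_\mathfrak{p}}$, which is what the openness argument from \cite{richarz-test-function} would need as input. (In the split case this stronger statement is exactly Fedorov's theorem \cite{fedorov2015grothendieck}, but you have no isotropic analogue on offer.) Note also that Grothendieck--Serre holds for \emph{any} reductive $G$ over a field; there is no special ``isotropic version'', so isotropy cannot be entering where you place it.

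What is missing, and where isotropy actually enters, is \emph{homotopy invariance of $G$-torsors}: by \cite[Theorem 3.3.7]{asok2015affine}, for $G$ isotropic reductive and $X$ smooth affine, every Nisnevich-locally trivial $G$-torsor on $\A^1_X$ is pulled back from $X$. The paper applies this after the first Grothendieck--Serre step to write $\scr F\iso p^*\scr G$ for some Nisnevich-locally trivial $\scr G$ on $X$; then $\scr G$ is generically trivial (restrict to a section, or to the generic point of $X$) and a second application of Grothendieck--Serre on $X$ makes $\scr G$ Zariski-locally trivial on $X$, which finishes. With this step inserted your outline becomes correct, and in fact the localize-at-$\mathfrak p$-then-spread-out manoeuvre via \cite{richarz-test-function} is no longer needed.
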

\begin{proof}
By arguing as in the alternative proof of Proposition \ref{prop:aff-grass-Zar},
what we need to show is the following: if $X$ is a smooth affine variety and
$\scr F$ is a $G$-torsor on $\A^1_X$ which is trivial over $\A^1_X \setminus
\{0\}$, then $\scr F$ is Zariski-locally on $X$ trivial. By definition $\scr F$ is
generically trivial, and hence by the resolution of the Grothendieck-Serre
conjecture over fields \cite{fedorov2015proof, panin2017proof}, $\scr F$ is
Zariski-locally trivial (on $\A^1_X$). By homotopy invariance for
$G$-torsors over smooth affine schemes \cite[Theorem 3.3.7]{asok2015affine},
we find that $\scr F \iso (\A^1_X \to X)^* \scr G$,
for some Nisnevich-locally trivial $G$-torsor $\scr G$ on $X$.
Now $\scr G$ is generically trivial, so by Grothendieck-Serre again $\scr
G$ is Zariski-locally trivial. This concludes the proof.
\end{proof}

\section{Main result}
\label{sec:main}

We now come to our main result. Let $\Spc(k)_\pt$ denote the $\infty$-category of
pointed motivic spaces. As usual we have a canonical functor $(\Sm_k)_\pt \to
\Spc(k)_\pt$. We also have the functor $\rho: \Pre(\Aff_k)_\pt \to \Spc(k)_\pt$.
It is obtained as the composite
\[ \Pre(\Aff_k)_\pt \xrightarrow{j^*} \Pre(\Sm^\aff_k)_\pt
     \xrightarrow{L} \Spc(k)_\pt, \]
where the $j^*$ is restriction along $j: \Sm^\aff_k \to \Aff_k$ and $L$ is the
motivic localization functor. Recall also the $\Gm$-loops functor $R^{\A^1}\Omega_\Gm:
\Spc(k)_\pt \to \Spc(k)_\pt$.

\begin{theorem} \label{thm:main}
Let $k$ be an \infinite field and $G$ an isotropic reductive $k$-group.
Then we have a canonical equivalence
\[ R^{\A^1} \Omega_\Gm G \wequi \rho(\Gr{G}) \]
in $\Spc(k)_\pt$. Here $\Gr{G}$ is pointed by the image of the identity element
in $G$.
\end{theorem}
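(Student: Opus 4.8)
By the work already done, the theorem is essentially a matter of stringing together the identifications established in the previous two sections, all of which become equivalences after motivic localization. I would organize the argument as follows.

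First, I would reduce to a statement about simplicial presheaves on $\Sm_k^\aff$. The functor $R^{\A^1}\Omega_\Gm$ on $\Spc(k)_\pt$ is computed, in the model $L_\mot \sPre(\Sm_k^\aff)_*$, by $R\Omega_\Gm L_\mot(-)$ (this is what Section \ref{sec:Gm-loops} is set up to deliver; it is the derived $\Gm$-loops in the injective motivic model structure). Applying Proposition \ref{prop:loops-main} to the group scheme $G$ — which is legitimate since $G$ is isotropic reductive over an \infinite field — gives
\[ R^{\A^1}\Omega_\Gm G \wequi \Sing_* \Omega_{\Gm,\A^1}^{gr} G \]
in $\sPre(\Sm_k^\aff)_*$. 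Now $\Omega_{\Gm,\A^1}^{gr} G$ is by definition the presheaf $X \mapsto G(X[t,t^{-1}])/G(X[t])$ on $\Sm_k^\aff$, i.e. the restriction along $j: \Sm_k^\aff \to \Aff_k$ of the presheaf $L_0 G / L_0^+ G \in \Pre(\Aff_k)$. Since $\Sing_*$ is an $\A^1$-localization and in particular a motivic equivalence, we obtain a motivic equivalence $R^{\A^1}\Omega_\Gm G \wequi j^*(L_0 G/L_0^+ G)$, and hence an equivalence in $\Spc(k)_\pt$ with the image of $L_0 G/L_0^+ G$ under the motivic localization.

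Second, I would compare $L_0 G/L_0^+ G$ with $\Gr{G}$ via Zariski sheafification. The key input is Proposition \ref{prop:aff-grass-Zar-2}: for isotropic reductive $G$ over an \infinite field, the canonical map $a_\Zar(L_0 G/L_0^+ G) \to \Gr{G}$ is an isomorphism \emph{on sections over smooth affine varieties}. Thus after restricting along $j$ and Zariski-sheafifying on $\Sm_k^\aff$, the presheaves $L_0 G/L_0^+ G$ and $\Gr{G}$ agree. Since Zariski-local weak equivalences are in particular motivic equivalences (Zariski descent is part of the motivic localization, being implied by Nisnevich descent), the unit maps $j^*(L_0 G/L_0^+G) \to a_\Zar j^*(L_0 G/L_0^+ G)$ and $j^*\Gr{G} \to a_\Zar j^*\Gr{G}$ are motivic equivalences. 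Stringing these together with the isomorphism from Proposition \ref{prop:aff-grass-Zar-2} yields
\[ R^{\A^1}\Omega_\Gm G \wequi \rho(L_0 G/L_0^+ G) \wequi \rho(\Gr{G}) \]
in $\Spc(k)_\pt$, where $\rho = L \circ j^*$ as in the statement, and where I should note that Zariski sheafification commutes with restriction along $j$ up to motivic equivalence (or simply that $\rho$, being a composite ending in the motivic localization, inverts Zariski-local equivalences on $\Aff_k$ too — this needs a half-line of justification). Finally, one checks that the chosen basepoints match up: $\Gr{G}$ is pointed by the image of $e \in G$, which under the Beauville–Laszlo / torsor description corresponds to the class of the identity in $L_0 G/L_0^+ G$, which in turn is the basepoint of $\Omega_{\Gm,\A^1}^{gr} G$ (the image of the trivial element), so all the equivalences above are pointed.

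\textbf{Main obstacle.} There is no deep obstacle remaining — the theorem is, as the introduction admits, ``trivial by now.'' The one place requiring genuine care is the bookkeeping around \emph{which} restriction-of-site and sheafification operations commute with each other and with the motivic localization: one must be sure that $a_\Zar$ computed on $\Aff_k$ and then restricted to $\Sm_k^\aff$ gives the same motivic object as $a_\Zar$ computed directly on $\Sm_k^\aff$, and that the comparison map of Proposition \ref{prop:aff-grass-Zar-2} being an isomorphism only on smooth affine sections is exactly enough (it is, since $\rho$ only sees sections over $\Sm_k^\aff$). The other minor point is the verification that the equivalence is compatible with basepoints, which is routine but should be stated.
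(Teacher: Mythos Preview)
Your proposal is correct and follows essentially the same route as the paper's own proof: apply Proposition~\ref{prop:loops-main}, identify $\Omega_{\Gm,\A^1}^{gr} G$ with $j^*(L_0 G/L_0^+ G)$, and then invoke Proposition~\ref{prop:aff-grass-Zar-2} together with the fact that Zariski sheafification is a motivic equivalence. The paper's version is terser---it asserts without comment that $j^*$ commutes with $a_\Zar$ and does not discuss basepoints---but the logic is identical, and the points you flag as needing care are exactly the ones the paper glosses over.
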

\begin{proof}
By Proposition \ref{prop:loops-main}, we have $R^{\A^1} \Omega_\Gm G =
R\Omega_\Gm L_{mot} G \wequi
\Omega_{\Gm,\A^1}^{gr} G$, a weak equivalence in $\Spc(k)_\pt$. In the notation
of Section \ref{sec:grassmann}, we have $\Omega_{\Gm,\A^1}^{gr} G = j^*(L_0
G/L_0^+ G)$. For $F \in \Pre(\Sm_S^\aff)_\pt$ the map $F \to a_\Zar F$ is a motivic
equivalence, i.e. becomes an equivalence in $\Spc(k)_\pt$.
Since $j^*$ commutes with $a_\Zar$, the result now follows from Proposition
\ref{prop:aff-grass-Zar-2}.
\end{proof}

\begin{example}
Group schemes $G$ satisfying the assumptions of Theorem \ref{thm:main} are,
among many others, $\mathrm{GL}_n, \mathrm{SL}_n, \mathrm{Sp}_n$.
\end{example}

\subsection*{Motives of singular varieties}
The presheaves $\Gr{G}$ are well-understood: they are filtered colimits of
projective varieties. Unfortunately, these projective varieties are highly
singular. Thus we need to incorporate motives of singular varieties in
order to make the best use of Theorem \ref{thm:main}.

Let $\Ft_k$ denote the category of finite type $k$-schemes, and suppose that $k$
has exponential characteristic $e$ (i.e. $e=1$ if $char(k) = 0$ and $e=p$ if
$char(k) = p > 0$). Recall the
$\infty$-category $\DM(k, \Z[1/e])$ of $\Z[1/e]$-linear motives
\cite{voevodsky-triang-motives} and the functor $M: \Spc(k)_\pt \to \DM(k, \Z)
\to \DM(k, \Z[1/e])$ sending a pointed motivic space to its motive.
There also is a more complicated functor
\[ \ul{M}: \Pre(\Ft_k)_\pt \to \DM(k, \Z[1/e]); \]
we recall its definition below in the proof of Proposition
\ref{prop:singularities}.
For $X \in (\Sm_k)_\pt$ we have $M X \wequi \ul{M} X$,
where on the right hand side we view $X$ as an element of $(\Ft_k)_\pt
\subset \Pre(\Ft_k)_\pt$. In other words, the functor $\ul{M}$ allows us to
make sense the motive of (among other things) singular varieties.

Denote by $e^*: \Pre(\Ft_k)_\pt \to \Pre(\Sm_k^\aff)_\pt$ the functor of
restriction along the canonical inclusion $\Sm_k^\aff \to \Ft_k$.

\begin{proposition} \label{prop:singularities}
Let $k$ be a perfect field and $\scr X \in \Pre(\Ft_k)_\pt$.
Then $M e^* \scr X \wequi \ul{M} \scr X$.
\end{proposition}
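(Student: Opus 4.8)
The plan is to reduce, by a colimit argument, to the case where $\scr X$ is representable, and then to identify the two sides via the comparison between cdh-local and Nisnevich-local motives --- for which the hypotheses ``$k$ perfect'' and ``$\Z[1/e]$-coefficients'' are precisely what is needed. First I would recall the construction of $\ul M$: it is the colimit-preserving functor $\PSh(\Ft_k)_\pt \to \DM(k,\Z[1/e])$ obtained by $\Z[1/e]$-linearising a pointed presheaf and applying the localisation functor that exhibits $\DM(k,\Z[1/e])$ as the $\A^1$-localisation of cdh-sheaves of $\Z[1/e]$-module complexes with transfers on $\Ft_k$; in particular $\ul M(X_+) \wequi M(X)$ is Voevodsky's motive for each $X \in \Ft_k$, and $\ul M$ preserves colimits because linearisation, cdh-sheafification, $\A^1$-localisation and ``adding transfers'' are all left adjoints. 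Likewise $M \circ L \circ e^* \colon \PSh(\Ft_k)_\pt \to \DM(k,\Z[1/e])$ preserves colimits: $e^*$ is restriction of space-valued presheaves (so has both adjoints), $L$ is a localisation, and $M$ is a left adjoint (and ``$M e^*\scr X$'' in the statement means $M$ applied after the motivic localisation $L$). Since $\PSh(\Ft_k)_\pt$ is generated under colimits by the $X_+$ with $X \in \Ft_k$, and since for set-valued $\scr X$ the presheaf $e^*\scr X$ of the statement is the restriction along $\Sm_k^\aff \subset \Ft_k$ of (the image of) $\scr X$, it suffices to produce a natural equivalence $M(L(e^* X_+)) \wequi \ul M(X_+) = M(X)$ for $X \in \Ft_k$.

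For $X$ smooth this is the already-recorded identity $MX \wequi \ul M X$ on $(\Sm_k)_\pt$: the sites $\Sm_k^\aff$ and $\Sm_k$ present the same motivic homotopy theory, $L(e^* X_+)$ is the motivic space of $X$, and its motive is Voevodsky's $M(X)$. For general $X \in \Ft_k$ one is comparing two colimit-preserving functors $\PSh(\Ft_k, \Z[1/e]) \to \DM(k,\Z[1/e])$: the one defining $\ul M$ (localise via the cdh/$\A^1$ presentation on $\Ft_k$), and the composite ``restrict along $\Sm_k^\aff \subset \Ft_k$, then localise via the Nisnevich/$\A^1$ presentation on $\Sm_k$'' --- which, using that $\Z[1/e]$-linearisation commutes with $e^*$, is exactly $M \circ L \circ e^*$. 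That these agree is precisely the statement that the equivalence of Cisinski--D\'eglise and Kelly between cdh-local motives on all finite-type $k$-schemes and the usual (Nisnevich, with transfers) motives on smooth $k$-schemes is implemented by the obvious restriction functor. Equivalently, and more concretely, one may argue by Noetherian induction on $\dim X$: the functor $X \mapsto M(L(e^*X_+))$ on $\Ft_k$ satisfies cdh descent because $\DM(k,\Z[1/e])$ does for $k$ perfect (Kelly), as does Voevodsky's $M(-)$; the two agree on smooth schemes; and de Jong--Gabber alterations let one express the motive of a singular variety in terms of motives of smooth varieties cdh-locally once $e$ is inverted, so the inductive descriptions of the two functors coincide. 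Colimit-preservation then upgrades this to all $\scr X \in \Pre(\Ft_k)_\pt$.

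The main obstacle is exactly this singular case of the representable step. The reductions --- colimit-preservation and passage to generators --- are formal, but matching the motive of the motivically-localised restriction to smooth affine schemes with Voevodsky's motive of an \emph{arbitrary}, possibly highly singular, finite-type $k$-scheme genuinely rests on ``resolution up to inverting $e$'': Gabber's alterations and Kelly's cdh-descent theorem for $\DM(k,\Z[1/e])$. This is the source of the hypothesis that we work over $\Z[1/e]$ rather than $\Z$. One should also keep track of the naturality (in the scheme) of the comparison between the two presentations of $\DM(k,\Z[1/e])$, rather than treating it as a bare equivalence of $\infty$-categories, so that the resulting equivalence $M e^* \scr X \wequi \ul M \scr X$ is natural in $\scr X$, as needed for the applications.
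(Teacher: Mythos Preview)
Your reductions are sound---both $\ul M$ and $M\circ e^*$ are colimit-preserving, and the smooth representable case is immediate---but the singular representable step has a real gap. You claim that $X\mapsto M(L(e^*X_+))$ satisfies cdh descent on $\Ft_k$ ``because $\DM(k,\Z[1/e])$ does for $k$ perfect (Kelly)''. Kelly's theorem is that the left Kan extension $e^M\colon \DM(k,\Z[1/e])\to\ul{\DM}(k,\Z[1/e])$ is an equivalence; it says nothing about cdh descent for an arbitrary functor \emph{into} $\DM$, and in particular not for $M\circ L\circ e^*$. Your alternative phrasing, that the Cisinski--D\'eglise--Kelly equivalence ``is implemented by the obvious restriction functor'', is true in the tautological sense $(e^M)^{-1}=(e^M)^*$, but you are then silently conflating ``restrict the motive'' ($(e^M)^*\circ\mu$) with ``restrict the presheaf, then take the motive'' ($M\circ e^*$). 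Identifying these is precisely the content of the proposition, so the Noetherian induction cannot begin: you would need an abstract blow-up square to become a pushout in $\DM$ after restriction to $\Sm_k^\aff$, and no independent argument for this is given.

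The paper's proof sidesteps this by working with the co-unit $\eta\colon e e^*\scr X\to\scr X$ of $e\dashv e^*$ directly, for arbitrary $\scr X$, with no reduction to representables. From the square of left adjoints $e^M\circ M\wequi\mu\circ e$, the desired $M e^*\scr X\wequi(e^M)^{-1}\mu(\scr X)$ is equivalent to $\mu$ inverting $\eta$. Here a different part of Kelly's work enters: for each prime $l\ne p$ the functor $\mu_l$ inverts $l$dh-local equivalences, and every object of $\Ft_k$ is $l$dh-locally smooth affine. It therefore suffices that $e^*(\eta)$ be an equivalence in $\PSh(\Sm_k^\aff)_\pt$, and this is immediate from $e^*e\wequi\id$, i.e.\ full faithfulness of $\Sm_k^\aff\hookrightarrow\Ft_k$. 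This is the alterations input you were reaching for, but packaged so that one checks a single map of presheaves is a local equivalence for a topology in which everything is locally smooth, rather than trying to verify descent for the a priori unknown functor $M\circ L\circ e^*$.
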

\begin{proof}
For a small (1-)category $\scr C$, denote by $\PSh(\scr C)$ the
$\infty$-category of (space-valued) presheaves on $\scr C$. If $f: \scr C \to
\scr D$ is a functor, we get an adjunction $f: \PSh(\scr C) \adj \PSh(\scr D):
f^*$. We have a full inclusion $\Pre(\scr C) \subset \PSh(\scr C)$ and similarly
for $\scr D$, and the
following diagram commutes:
\begin{equation*}
\begin{CD}
\Pre(\scr C) @>>> \PSh(\scr C) \\
@Af^*AA              @Af^*AA   \\
\Pre(\scr D) @>>> \PSh(\scr D).
\end{CD}
\end{equation*}

The functor $\ul{M}$ is constructed via the following commutative diagram
\begin{equation*}
\begin{CD}
\PSh(\Ft_k)_\pt @>{\mu}>> \ul{\DM}(k, \Z[1/e])  \\
@AeAA                           @Ae^MAA     \\
\PSh(\Sm_k^\aff)_\pt @>{M}>> \DM(k, \Z[1/e]).
\end{CD}
\end{equation*}
The category $\ul{\DM}(k, \Z[1/e])$ can be defined as the $T$-stabilisation
of $L_{cdh, \A^1} \PSh_\Sigma(Cor(k))$, where $Cor(k)$ is the category of
finite correspondences \cite[Theorem 2.5.9]{kelly2013triangulated} and
$\PSh_\Sigma$ denotes the nonabelian derived category \cite[Section
5.5.8]{lurie-htt}.
The functor $e^{M}$ is the stabilisation of the derived left Kan extension functor $e:
\PSh_\Sigma(SmCor(k)) \to \PSh_\Sigma(Cor(k))$. The important result is that
$e^{M}$ is an equivalence \cite[Corollary 5.3.9]{kelly2013triangulated};
one puts $\ul{M} = (e^{M})^{-1} \circ \mu$.

In order to prove our result, it is thus enough to show that the co-unit map
$\eta: e e^* \scr X \to \scr X$ is inverted by the functor $\mu$. For this it
suffices to show that the image $\mu_l(\eta) \in \ul{\DM}(k, \Z_{(l)})$ of $\mu(\eta)$ is
an equivalence for all primes $l \ne p$. The functor $\mu_l$ inverts local
equivalences for the so-called $ldh$-topology \cite[Corollary 5.3.9]{kelly2013triangulated},
and all finite type $k$-schemes are $ldh$-locally smooth \cite[Corollary
3.2.13]{kelly2013triangulated}. It is hence enough to show that $e^*(\eta): e^*
e e^* \scr X \to e^* \scr X$ is an equivalence. This follows from the fact that
$e^* e \wequi \id_{\PSh(\Sm_k^\aff)_\pt}$, which itself is a consequence of
fully faithfulness of $e: \Sm_k^\aff \to \Ft_k$.
\end{proof}

\begin{remark}
If $k$ has characteristic $0$, then using \cite{voevodsky2008unstable}
for $\scr X \in \Pre(\Ft_k)_\pt$ one may
define the $S^1$-stable homotopy type $\ul{\Sigma}^\infty_s \scr X \in \SH^{S^1}(k)$.
Essentially the same proof as above shows that $\Sigma^\infty_s e^* \scr X \wequi
\ul{\Sigma}^\infty_s \scr X \in \SH^{S^1}(k)$.
In positive characteristic, there does not seem to
be an equally accessible $S^1$-stable homotopy type of singular varieties.
\end{remark}

\begin{corollary} \label{corr:colim-Ft}
Let $X_1 \to X_2 \to \dots \in (\Ft_k)_\pt$ be a directed system of pointed
finite type $k$-schemes. View each $X_i$ as an element of $\Pre(\Aff_k)_\pt$ and
put $\scr X = \colim_i X_i \in \Pre(\Aff_k)_\pt$. Then we have $M \rho(\scr X)
\wequi \colim_i \ul{M} X_i$.
\end{corollary}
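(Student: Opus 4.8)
The plan is to reduce the statement to Proposition~\ref{prop:singularities} together with the fact that all the functors involved commute with filtered colimits. First I would observe that $\rho$ restricted to $\Pre(\Ft_k)_\pt$ factors as $M \circ e^* $ need not literally hold, so instead I would use the composite $\Pre(\Aff_k)_\pt \xrightarrow{j^*} \Pre(\Sm_k^\aff)_\pt \xrightarrow{L} \Spc(k)_\pt$; the point is that each $X_i$, being finite type, also lies in $\Pre(\Ft_k)_\pt$, and its restriction to $\Sm_k^\aff$ is computed by $e^*$ (both inclusions $\Sm_k^\aff \to \Aff_k$ and $\Sm_k^\aff \to \Ft_k$ are compatible). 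Thus $\rho(X_i) \wequi L e^* X_i$ in $\Spc(k)_\pt$, and applying $M$ gives $M\rho(X_i) \wequi M e^* X_i \wequi \ul{M} X_i$ by Proposition~\ref{prop:singularities}.

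Next I would handle the colimit. The restriction functor $j^*: \Pre(\Aff_k)_\pt \to \Pre(\Sm_k^\aff)_\pt$ is computed objectwise, hence preserves all colimits; the motivic localisation $L: \Pre(\Sm_k^\aff)_\pt \to \Spc(k)_\pt$ is a left adjoint (equivalently, a left Bousfield localisation followed by inclusion of the localised subcategory is computed as a reflector), so it preserves colimits; and $M: \Spc(k)_\pt \to \DM(k,\Z[1/e])$ is also a left adjoint (it is the composite of $\Sigma^\infty$-type functors), hence colimit-preserving. Therefore
\[
  M\rho(\scr X) = M\rho\bigl(\colim_i X_i\bigr) \wequi \colim_i M\rho(X_i).
\]
Combining with the objectwise identification of the previous paragraph yields $M\rho(\scr X) \wequi \colim_i \ul{M} X_i$, which is the claim.

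The one subtlety worth spelling out — and the step I expect to require the most care — is the compatibility of the three versions of ``restriction'' (from $\Aff_k$, from $\Ft_k$, from $\Sm_k^\aff$) on the representable presheaves $X_i$: one must check that, when $X_i$ is viewed as an object of $\Pre(\Aff_k)_\pt$ and then restricted along $j$, the result agrees with $e^* X_i$, where $X_i$ is viewed in $\Pre(\Ft_k)_\pt$. This is immediate from the fact that for $U \in \Sm_k^\aff$ one has $X_i(U) = \Hom_{\Sch_k}(U, X_i)$ regardless of which ambient category of schemes one computes the Yoneda embedding in, but it should be stated so that Proposition~\ref{prop:singularities} can be invoked cleanly. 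Everything else is formal: it is just the assembly of ``$M$, $L$, and $j^*$ all preserve filtered (indeed arbitrary) colimits'' with the objectwise computation $M\rho(X_i)\wequi \ul{M}X_i$.
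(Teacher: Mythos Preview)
Your argument is correct and matches the paper's: both reduce to Proposition~\ref{prop:singularities} together with the fact that the relevant functors preserve filtered colimits (the paper applies the Proposition once to the colimit presheaf $\scr X' := \colim_i X_i \in \Pre(\Ft_k)_\pt$, noting $e^*\scr X' = j^*\scr X$, rather than termwise to each $X_i$, but this is a cosmetic reordering). One small correction: the inclusion $\Pre(\Sm_k^\aff)_\pt \hookrightarrow \PSh(\Sm_k^\aff)_\pt$ is a \emph{right} adjoint (its left adjoint is $\pi_0$), so $\rho$ preserves filtered colimits but not arbitrary ones---your parenthetical ``indeed arbitrary'' is false, though only the filtered case is needed here.
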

We note that a filtered colimit of fpqc-sheaves (computed in $\Pre(\Aff_k)$) is an fpqc-sheaf
\cite[Tags 0738 (4) and 022E]{stacks-project},
and so the colimit in the corollary can be computed in the category of fpqc-sheaves.
\begin{proof}
Let $\scr X' \in \Pre(\Ft_k)_\pt$ be the colimit viewed as a presheaf on finite
type schemes. Then $e^*(\scr X') = j^*(\scr X)$ and the result follows from
Proposition \ref{prop:singularities}, using that all functors in sight commute
with filtered colimits.
\end{proof}

\begin{corollary} \label{corr:motive}
Let $G$ be an isotropic reductive group over an \infinite perfect field $k$ of
exponential characteristic $e$.
Then we have
\[ M(\rho(\Gr{G})) \wequi \bigoplus_{\mu \in X(G)} \Z[1/e](l(\mu))[2l(\mu)]
                                         \in \DM(k, \Z[1/e]). \]
Here $X(G)$ is the set of cocharacters of $G$, and $l(\mu)$ is the
(non-negative) integer from the proof of Lemma \ref{lemm:timo}.
\end{corollary}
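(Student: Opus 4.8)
The plan is to reduce the computation to the known geometry of the affine Schubert varieties. By \eqref{eq:GrG-colim} we may write $\Gr{G} \wequi \colim_i X_i$, a filtered colimit in $\Pre(\Aff_k)$ (equivalently in fpqc sheaves) with each $X_i$ a pointed finite type --- in fact projective --- $k$-scheme. Corollary~\ref{corr:colim-Ft} then gives $M(\rho(\Gr{G})) \wequi \colim_i \ul{M}(X_i)$, so it suffices to compute the motives of the (highly singular) varieties $X_i$ and to identify the transition maps.

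For the individual motives I would use the Bruhat decomposition appearing in the proof of Lemma~\ref{lemm:timo}: at least after base change to a splitting field, each $X_i$ is a finite disjoint union $\bigsqcup_{\mu \in S_i} Y_\mu$ of locally closed cells with $Y_\mu \iso \A^{l(\mu)}$, where $S_i \subset X(G)$ is finite, downward closed for the closure order, and $\bigcup_i S_i = X(G)$. Since $X_i$ is proper, $\ul{M}(X_i)$ agrees with the (ordinary, equivalently compactly supported) motive of the proper scheme $X_i$; filtering $X_i$ by the closed unions of its cells and applying the localization triangles for $M^c$, together with $M^c(\A^n) \wequi \Z[1/e](n)[2n]$, yields a finite tower of distinguished triangles. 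These all split, because $\Hom_{\DM(k,\Z[1/e])}(\Z[1/e](a)[2a], \Z[1/e](b)[2b+1]) = 0$ for all $a,b$ (the relevant motivic cohomology group $H^{2(b-a)+1,\,b-a}(k)$ vanishes). Hence $\ul{M}(X_i) \wequi \bigoplus_{\mu \in S_i} \Z[1/e](l(\mu))[2l(\mu)]$, and for $i \le j$ one checks that $\ul{M}(X_i) \to \ul{M}(X_j)$ is the inclusion of the sub-sum indexed by $S_i \subset S_j$, the cell stratifications being compatible with the closed immersions $X_i \hookrightarrow X_j$.

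Passing to the colimit is then formal: filtered colimits commute with coproducts in $\DM(k,\Z[1/e])$ and $\bigcup_i S_i = X(G)$, so
\begin{align*}
M(\rho(\Gr{G})) &\wequi \colim_i \bigoplus_{\mu \in S_i} \Z[1/e](l(\mu))[2l(\mu)] \\
&\wequi \bigoplus_{\mu \in X(G)} \Z[1/e](l(\mu))[2l(\mu)],
\end{align*}
the $\mu = 0$ cell contributing the summand $\Z[1/e]$. Combined with Theorem~\ref{thm:main}, this also computes $M(R^{\A^1}\Omega_\Gm G)$.

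The main obstacle is the middle step. The identification $MX \wequi \ul{M}X$ established before Proposition~\ref{prop:singularities} is stated only for \emph{smooth} $X$, whereas the $X_i$ are singular; one therefore needs either a separate comparison $\ul{M}(X) \wequi M^c(X)$ for proper $X$, so that Voevodsky's localization triangles become available, or else to rerun the cell-filtration argument directly from the $\cdh$- and $ldh$-descent and $\A^1$-invariance properties of $\ul{M}$ recalled in the proof of Proposition~\ref{prop:singularities} --- which is also why the coefficients must be $\Z[1/e]$ in positive characteristic. A technically lighter alternative, at the cost of more bookkeeping, is to replace the affine paving by a Bott--Samelson resolution of each $X_i$ and compute via abstract blow-up squares of smooth schemes. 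Finally, one must handle the passage from the split case, where the Bruhat decomposition is available over $k$, to the case of a general isotropic $G$.
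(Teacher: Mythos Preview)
Your approach matches the paper's: reduce via Corollary~\ref{corr:colim-Ft} to $\colim_i \ul{M}(X_i)$, use projectivity to identify $\ul{M}(X_i) = M^c(X_i)$, and then split the Gysin/localization triangles coming from the affine paving (the paper filters specifically by $X_i = \bigcup_{l(\mu)\le i} Y_\mu$ and argues the boundary maps vanish ``for weight reasons'', which is the special case $b<a$ of your $\Hom$-vanishing). As for the obstacles you flag, the paper handles them no differently: it simply cites Voevodsky for $M = M^c$ on projective schemes without further comment, and it does not separately address the passage from split to general isotropic $G$, so your argument is already at least as complete as the paper's on both points.
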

\begin{proof}
The Bruhat decomposition provides a filtration of $\Gr{G}$ by closed
subschemes $\emptyset = X_{-1} \subset X_0 \subset X_1 \subset \dots \subset
\Gr{G}$. Here $X_i
= \cup_{l(\mu) \le i} Y_\mu$, in the notation of the proof of Lemma \ref{lemm:timo}.
Then $X_{i-1} \to X_i$ is a closed immersion with open complement isomorphic to
$\coprod_{l(\mu) = i} \A^i$. It is well-known that this implies our result. For
the convenience of the reader, we review the standard argument.
We have $M(\rho(\Gr{G})) = \colim_i M(X_i)$ (by Corollary \ref{corr:colim-Ft}), and $M(X_{-1}) =
0$, so it suffices to prove that $M(X_i) = M(X_{i-1}) \oplus \bigoplus_{l(\mu)=i}
\Z\{i\}[2i]$. Since each $X_i$ is projective, we have $M(X_i) = M^c(X_i)$, where
$M^c$ denotes the motive with compact support \cite[p. 9]{voevodsky-triang-motives}. Thus we can use the
Gysin triangle with compact support \cite[Proposition 4.1.5]{voevodsky-triang-motives}
\[ M^c(X_{i-1}) \to M^c(X_{i}) \to M^c(X_{i} \setminus X_{i-1})
    \to M^c(X_{i-1})[1]. \]
Since $M^c(\A^i) = \Z(i)[2i]$, the boundary map vanishes for weight reasons (by
induction, $M^c(X_{i-1})$ is a sum of Tate motives of weight $<i$),
giving the desired splitting.
\end{proof}

\bibliographystyle{plainc}
\bibliography{bibliography}

\end{document}